\newtheorem{thm}{Theorem}[section]
\newtheorem{prop}[thm]{Proposition}
\theoremstyle{definition}
\theoremstyle{remark}
\newtheorem{rem}[thm]{Remark}
\numberwithin{equation}{section}
\def\ox{\bar{x}}
\def\oy{\bar{y}}
\def\ph{\varphi}
\def\al{\alpha}
\def\gph{\mbox{\rm gph}\,}
\def\cl{\mbox{\rm cl}\,}
\def\co{\mbox{\rm co}\,}
\def\epi{\mbox{\rm epi}\,}
\def\ra{\rangle}
\def\la{\langle}
\def\emp{\emptyset}
\def\tto{\;{\lower 1pt\hbox{$\rightarrow$}}\kern-10pt
\hbox{\raise 2pt\hbox{$\rightarrow$}}\;}
\def\disp{\displaystyle}
\def\dom{\mbox{\rm dom}\,}
\begin{document}

\title[Nonsmooth optimistic and pessimistic bilevel programs]{Two-level value function approach to nonsmooth optimistic and pessimistic bilevel programs}%
\author{Stephan Dempe}%
\address{S. Dempe: Institut f\"ur Numerische Mathematik und Optimierung, TU Freiberg, Germany}%
\email{dempe@math.tu-freiberg.de}%

\author{Boris S. Mordukhovich}
\address{B.S. Mordukhovich: Department of Mathematics, Wayne State University Detroit, USA}
\email{boris@math.wayne.edu}

\author{Alain B. Zemkoho}
\address{A.B. Zemkoho: School of Mathematics, University of Southampton\\
Southampton, UK}
\email{a.b.zemkoho@soton.ac.uk}
\thanks{The research of the first author has been supported by Deutsche Forschungsgemeinschaft, Project DE 650/10-1. The research of the second author was partially supported by the USA National Science Foundation under grant DMS-1512846, by the USA Air Force Office of Scientific Research under grant No.\,15RT0462, and by the RUDN University Program 5-100.The research of the third author was partially supported by the EPSRC grant {EP/P022553/1}}
\subjclass{49K35, 49K40, 90C26, 90C30}%
\keywords{optimistic and pessimistic bilevel programming, two-level value functions, variational analysis, generalized differentiation, optimality conditions}%

\dedicatory{The paper is dedicated to Ji\v r\'i Outrata in honor of his 70th birthday}%

\date{\today}
\begin{abstract} The authors' paper in \emph{Optimization 63 (2014), 505--533}, see Ref. \cite{DempeMordukhovichZemkohoPessimistic}, was the first one to provide detailed optimality conditions for pessimistic bilevel optimization. The results there were based on the concept of the {\em two-level optimal value function} introduced and analyzed in \emph{SIAM J. Optim. 22 (2012), 1309--1343}; see Ref. \cite{DempeMordukhovichZemkohoTwo-level}, for the case of optimistic bilevel programs. One of the basic assumptions in both of these papers is that the functions involved in the problems are at least continuously differentiable. Motivated by the fact that many real-world applications of optimization involve functions that are nondifferentiable at some points of their domain, the main goal of the current paper is extending the two-level value function approach to deriving new necessary optimality conditions for both  optimistic and pessimistic versions in bilevel programming with nonsmooth data.
\end{abstract}
\maketitle

\tableofcontents

\section{Introduction}

The original framework of {\em bilevel programming}, or {\em bilevel optimization}, goes back to von Stackelberg \cite{Stackelberg1934} and can be written in the form
\begin{equation}\label{Bilevel0la}
\underset{x\in X}\min~F(x,y)\;\mbox{ subject to }\;y\in S(x):=\arg\underset{y\in K(x)}\min~f(x,y),
\end{equation}
where the leader is only in control of the upper level variable $x$, while the follower controls the lower level variable $y$. Here $F\colon\mathbb{R}^n\times\mathbb{R}^m\to\mathbb{R}$ and $f\colon\mathbb{R}^n\times\mathbb{R}^m\to\mathbb{R}$ are called the {\em upper-level} and {\em lower-level} objective functions, respectively, $X$ denotes the upper-level feasible set, while the set-valued mapping $K\colon\mathbb{R}^n\rightrightarrows\mathbb{R}^m$ describes the lower-level feasible set. Note nonetheless that the main focus in the literature (as discussed, i.e., in  \cite{DempeFoundations} and \cite{DempeMordukhovichZemkohoTwo-level}), has been not on the original version \eqref{Bilevel0la} but on its following conventional simplification:
\begin{equation}\label{Standard Bilevel0la}
\underset{x,y}\min~F(x,y)\;\mbox{ subject to }\;x\in X,\;\,y\in S(x),
\end{equation}
were the leader appears to be in control of both variables. The latter formulation is {\em globally} equivalent to the following {\em optimistic} bilevel optimization problem, which is denoted by ($P_o$) and is written in the form
\begin{equation}\label{optimistic 1}
\underset{x\in X}\min~\varphi_{\circ}(x):=\underset{y}\min\big\{F(x,y)|\;y\in S(x)\big\},
\end{equation}
which indicates a certain cooperation between the upper and lower level players. However, \eqref{Standard Bilevel0la} and \eqref{optimistic 1} are {\em not} equivalent {\em locally} as demonstrated and discussed in \cite{DempeMordukhovichZemkohoTwo-level}. To this end, observe also that the leader simply may not be  able to cooperate with the follower in practice. To limit damages that may result from negative choices of the follower, the leader needs to solve the following {\em pessimistic} bilevel program denoted by ($P_p$):
\begin{equation}\label{pesimi 1}
\underset{x\in X}\min~\varphi_{p}(x):=\underset{y}\max\big\{F(x,y)\big|\,y\in S(x)\big\},
\end{equation}
where the {\em min} operation on the right-hand side of \eqref{optimistic 1} is replaced by the {\em max} one; both minimum and maximum are achieved in \eqref{pesimi 1} and in what follows under the assumptions made. It has been well recognized that problems ($P_o$) and ($P_p$) are essentially different from each other; see, e.g., \cite{DempeFoundations} for more discussions. In fact, problem ($P_p$) belongs the most challenging class in bilevel optimization. This is the reason why very little has been known in the literature for solving this problem. There are few publications (see, e.g., \cite{CervinkaMatonohaOutrata2013,WiesemannTsoukalasKleniatiRustem2013}) that propose global optimization techniques to solve ($P_p$). Taking into account serious limitations in global optimization, it is important to develop local optimization methods to solve pessimistic bilevel models. Pursuing this goal, the authors developed in \cite{DempeMordukhovichZemkohoPessimistic}, for the first time in the literature, detailed necessary conditions applied to local optimal solutions of pessimistic bilevel programs. Our approach there is based on the study and application of subdifferential properties of the {\em two-level value function}
\begin{eqnarray}\label{2level}
\varphi_{\circ}(x):=\underset{y}\min\big\{F(x,y)|\;y\in S(x)\big\},\quad x\in\mathbb{R}^n,
\end{eqnarray}
defined in \cite{DempeMordukhovichZemkohoTwo-level} and its symmetric {\em maximizing} version considered in \cite{DempeMordukhovichZemkohoPessimistic} in the form
\begin{eqnarray}\label{2level-pes}
\varphi_p(x):=\underset{y}\max\big\{F(x,y)|\;y\in S(x)\big\},\quad x\in\mathbb{R}^n.
\end{eqnarray}
We refer the reader to \cite{DempeMordukhovichZemkohoTwo-level,DempeMordukhovichZemkohoPessimistic} for more details and the reasons behind the names of \eqref{2level} and \eqref{2level-pes}. As shown in \cite{DempeMordukhovichZemkohoTwo-level}, the subdifferential and related Lipschitz continuity properties of the function $\varphi_{\circ}$ are instrumental for deriving optimality and stationarity conditions for optimistic bilevel programs while the form \eqref{2level-pes} is crucial to obtain similar results for the pessimistic model with smooth data.

Note that the two-level value functions \eqref{2level} and \eqref{2level-pes} are clearly different from the {\em lower-level optimal value function} defined by
\begin{equation}\label{OptimalValueFunction}
\varphi(x):=\underset{y}\min~\big\{f(x,y)\big|\;y\in K(x)\big\}
\end{equation}
that is widely spread in variational analysis under the name of ``marginal function." Ji\v r\'i Outrata was the first \cite{o90} to use \eqref{OptimalValueFunction} in the framework of optimistic bilevel programming via the so-called {\em value function approach} introduced by him and then largely developed in the literature; see, e.g., \cite{DempeFoundations,DempeDuttaMordukhovichNewNece,mor18,MordukhovichNamPhanVarAnalMargBlP,YeZhuOptCondForBilevel1995} and the references therein. Observe also that both functions \eqref{2level} and \eqref{OptimalValueFunction} are {\em intrinsically nonsmooth} even when the initial data of problems \eqref{optimistic 1} and \eqref{pesimi 1} are infinitely differentiable.

Developing here the two-level value function approach to bilevel optimization, recall that in both papers \cite{DempeMordukhovichZemkohoTwo-level,DempeMordukhovichZemkohoPessimistic} we assumed that the functions involved in the descriptions of optimistic and pessimistic problems are at least {\em continuously differentiable}. As discussed, e.g., in \cite{DempeZemkohoOnTheKKTRef}, many real-world applications of bilevel optimization are described by nondifferentiable functions. In order to widen the scope of applications of the results in \cite{DempeMordukhovichZemkohoTwo-level,DempeMordukhovichZemkohoPessimistic}, we extend them in this paper to functions that are {\em not necessarily differentiable}. Before proceeding with optimality conditions for ($P_o$) and ($P_p$) in the nonsmooth settings of Section~\ref{Necessary optimality conditions} and Section~\ref{pessimistic}, respectively, we derive the corresponding sensitivity results for the two-level value functions $\varphi_{\circ}$ in Section~\ref{section 3} and its symmetric maximizing version $\ph_p$ in Section~\ref{pessimistic}. The concluding Section~\ref{remarks} contains some discussions on further research.

The tools of variational analysis and generalized differentiation needed in what follows are summarized in the next section. Throughout the paper we use the standard notation and terminology in the area; see, e.g., \cite{MordukhovichBook2006,RockafellarWetsBook1998}.

\section{Preliminaries from Variational Analysis}\label{section 2}

In this section we recall and briefly overview for the reader's convenience some standard tools of generalized differentiation, which are broadly used in the subsequent variational analysis of both optimistic and pessimistic bilevel programs. More details can be found in the books \cite{MordukhovichBook2006,RockafellarWetsBook1998} and their bibliographies.

Starting with sets, consider $\Theta\subset\mathbb{R}^n$ locally closed around $\ox\in\Theta$ and define the (basic, limiting, Mordukhovich) {\em normal cone} to $\Theta$ at $\ox$ by
\begin{eqnarray}\label{basic normal cone}
\begin{array}{ll}
N_{\Theta}(\ox):=\Big\{v\in\mathbb{R}^n\Big|&\exists\,x_k\to\ox,\;w_k\in\Pi_{\Theta}(x_k),\;\alpha\ge 0\\
&\mbox{such that }\;\al(x_k-w_k)\to v\;\mbox{ as }\;k\to\infty\Big\},
\end{array}
\end{eqnarray}
where $\Pi_\Theta(x)$ stands for the {\em Euclidean projector} of $x$ onto $\Theta$. There are various equivalent descriptions of \eqref{basic normal cone}, which are not used in this paper. Note that the construction \eqref{basic normal cone} is {\em robust} (i.e., the set-valued mapping $x\mapsto N_{\Theta}(x)$ has closed graph), while $N_{\Theta}(\ox)$ is often {\em nonconvex} for nonconvex sets $\Theta$ as, e.g., for $\Theta:=\gph|x|\subset\mathbb{R}^2$, the graph of the simplest nonsmooth convex function, at $\ox=0$. Nevertheless, the normal cone \eqref{basic normal cone} and associated with it subdifferential and coderivative notions for extended-real-valued functions and set-valued mappings/multifunctions enjoy {\em full calculus} based on the {\em variational} and {\em extremal principles} of variational analysis.

The convex closure of the normal cone \eqref{basic normal cone} denoted by
\begin{eqnarray}\label{cnc}
\Bar N_{\Theta}(\ox):=\cl\co N_{\Theta}(\ox),\quad\ox\in\Theta,
\end{eqnarray}
is known as the (Clarke) {\em convexified normal cone} to $\Theta$ at $\ox$. Note that the convexification in \eqref{cnc} may dramatically enlarge the sets of normals (e.g.,$\Bar N_{\Theta}(0)=\mathbb{R}^2$ for $\Theta=\gph|x|$), worsen calculus rules, and lead to the lost of robustness in non-Lipschitzian setting. On the other hand, it has computational advantages over \eqref{basic normal cone} and allows to use for its study and applications powerful tools of convex analysis.

Given an extended-real-valued lower semicontinuous (l.s.c.) function $\psi\colon(-\infty,\infty]$ finite at $\ox$, we define its {\em subdifferential} at this point geometrically via the normal cone \eqref{basic normal cone} to the epigraph $\epi\psi$ by
\begin{equation}\label{Basic Subdifferential}
\partial\psi(\bar x):=\left\{\xi\in\mathbb{R}^n|\;(\xi,-1)\in N_{\text{epi}\psi}\big(\bar x,\psi(\bar x)\big)\right\}
\end{equation}
while noting that \eqref{Basic Subdifferential} admits several equivalent analytical representations. When $\psi$ is locally Lipschitzian around $\bar x$, we get from \eqref{Basic Subdifferential} the (Clarke) {\em generalized gradient}
\begin{equation}\label{Clarke Subdifferential}
\Bar{\partial}\psi(\bar{x}):=\co\partial\psi(\bar{x}),
\end{equation}
which can also be defined by different equivalent ways. Both constructions \eqref{Basic Subdifferential} and \eqref{Clarke Subdifferential} reduce to the classical gradient and subdifferential of convex analysis when $\psi$ is continuous differentiable around $\ox$ and convex, respectively. Observe the convex hull symmetry property for locally Lipschitz functions
\begin{equation}\label{convex hull property 1}
\co\partial\big(-\psi)(\bar{x}\big)=-\co\partial\psi(\bar{x})
\end{equation}
that is largely used in what follows.

Next we recall some notions to (single-valued and set-valued) mappings. Given a closed-graph multifunction $\Psi\colon\mathbb{R}^n\rightrightarrows\mathbb{R}^m$, its {\em coderivative} at the graph point $(\bar{x},\bar{y})\in\gph\Psi:=\{(x,y)\in\mathbb{R}^n\times\mathbb{R}^m|\;y\in\Psi(x)\}$ is defined as a set-valued mapping generated by the normal cone \eqref{basic normal cone} to the graph of $\Psi$ by
\begin{equation}\label{cod-definition}
D^*\Psi(\bar{x},\bar{y})(y^*):=\left\{x^*\in\mathbb{R}^n|\;(x^*,-y^*)\in N_{\text{gph}\,\Psi}(\bar{x},\bar{y})\right\},\;y^*\in\mathbb{R}^m,
\end{equation}
where $\oy=\Psi(\ox)$ is dropped in notation if $\Psi\colon\mathbb{R}^n\to\mathbb{R}^m$ is single-valued. If in the latter case $\Psi$ is continuously differentiable at $\ox$, the coderivative \eqref{cod-definition} is single-valued as well and reduced to the adjoint/transposed Jacobian linearly applied to $y^*$:
\begin{eqnarray*}
D^*\Psi(\ox)(y^*)=\big\{\nabla\Psi(\ox)^*y^*\big\}\;\mbox{ for all }\;y^*\in\mathbb{R}^m.
\end{eqnarray*}
In general the coderivative \eqref{cod-definition} is a positively homogeneous set-valued mapping satisfying comprehensive calculus rules. Observe the scalarization formula
\begin{eqnarray*}
D^*\Psi(\ox)(y^*)=\partial\big\la y^*,\Psi\big)(\ox),\quad y^*\in\mathbb{R}^m,
\end{eqnarray*}
relating the coderivative and subdifferential of single-valued Lipschitzian mappings.

Proceeding further with set-valued mappings, recall that $\Psi\colon\mathbb{R}^n\rightrightarrows\mathbb{R}^m$ is {\em inner semicompact} at some point $\bar{x}$ with $\Psi(\bar{x})\ne\emp$ if for every sequence $x_k\rightarrow\bar{x}$ with $\Psi(x_k)\ne\emp$ there is a sequence of $y_k\in\Psi(x_k)$ that contains a convergent subsequence as $k\rightarrow\infty$. It follows that the inner semicompactness holds in the finite-dimensional setting under consideration whenever $\Psi$ is uniformly bounded around $\bar{x}$, i.e., there exists a neighborhood $U$ of $\bar{x}$ and a bounded set $\Theta\subset\mathbb{R}^m$ such that $\Psi(x)\subset\Theta$ for all $x\in U$.

The following property is more restrictive while bring us to more precise results of the coderivative calculus and its applications. The mapping $\Psi\colon\mathbb{R}^n\rightrightarrows\mathbb{R}^m$ is said to be {\em inner semicontinuous} at $(\bar{x},\bar{y})\in\gph\Psi$ if for every sequence $x_k\rightarrow\bar{x}$ there is a sequence of $y_k\in\Psi(x_k)$ that converges to $\bar{y}$ as $k\rightarrow\infty$. For single-valued mappings $\Psi\colon\mathbb{R}^n\to\mathbb{R}^m$ this property obviously reduces to the continuity of $\Psi$ at $\ox$. Furthermore, if $\Psi\colon\mathbb{R}^n\tto\mathbb{R}^m$ is inner semicompact at $\bar{x}$ with $\Psi(\bar{x})=\{\bar{y}\}$, then $\Psi$ is inner semicontinuous at $(\ox,\oy)$.

Finally in this section, we review some Lipschitzian properties of set-valued mappings used in what follows. The mapping $\Psi\colon\mathbb{R}^n\rightrightarrows\mathbb{R}^m$ satisfies the {\em Lipschitz-like} (Aubin or pseudo-Lipschitz) property around $(\ox,\oy)\in\gph\Psi$ if there are neighborhoods $U$ of $\ox$ and $V$ of $\oy$ together with a constant $\ell\ge 0$ such that
\begin{eqnarray}\label{lip}
\Psi(x)\cap V\subset\Psi(u)+\ell\|x-u\|\mathbb{B}\;\mbox{ for all }\;x,u\in U,
\end{eqnarray}
where $\mathbb{B}$ stands for the closed unit ball of the space in question. This property is a graphical localization of the classical (Hausdorff) local Lipschitz continuity of $\Psi$ around $\ox$, which corresponds to the case of $V=\mathbb{R}^m$ in \eqref{lip}. A complete characterization of the Lipschitz-like property \eqref{lip}, with precise computing the infimum of Lipschitzian moduli $\ell$ in \eqref{lip}, is given by the {\em coderivative/Mordukhovich criterion} in the form
\begin{eqnarray}\label{cc}
D^*\Psi(\ox,\oy)(0)=\{0\}.
\end{eqnarray}
We refer the reader to \cite{mor93,MordukhovichBook2006,mor18,RockafellarWetsBook1998} for more details and numerous applications of \eqref{cc} in variational and nonlinear analysis, optimization, and related areas.

The {\em calmness} property of $\Psi$ at $(\ox,\oy)$ is defined via \eqref{lip} with the fixed vector $u=\ox$ therein. In the case where $V=\mathbb{R}^m$, this property goes back to Robinson \cite{Robinson1981} who called it the ``upper Lipschitz property" of $\Psi$ at $\ox$. It is proved in \cite{Robinson1981} that the upper Lipschitz (and hence calmness) property holds at every point if the graph of $\Psi$ is {\em piecewise polyhedral}, i.e., expressible as the union of finitely many polyhedral sets. Efficient conditions for the validity of the calmness property and its broad applications to variational analysis and optimization were strongly developed by Ji\v r\'i Outrata and his collaborators; see, e.g., \cite{HenrionJouraniOutrataCalmness2002,hen-out01,mor-out07} among many other publications.

\section{Sensitivity Analysis of the Two-Level Value Function}\label{section 3}

This section is devoted to a local sensitivity/stability analysis of the two-level value function $\varphi_{\circ}$ from \eqref{2level}. Besides deriving efficient pointbased upper estimate of the basic subdifferential \eqref{Basic Subdifferential}, we establish here verifiable conditions for the local {\em Lipschitz continuity} of $\varphi_\circ$ around the reference point. From now on unless otherwise stated, the structure of the lower-level feasible solution map is considered to be
\begin{equation}\label{K(x)}
K(x):=\big\{y\in\mathbb{R}^m\big|\;(x,y)\in\Theta_2\cap\vartheta^{-1}_2(\Lambda_2)\big\},
\end{equation}
where the sets $\Theta_2\subset\mathbb{R}^n\times\mathbb{R}^m$ and $\Lambda_2\subset\mathbb{R}^p$ are closed, and where the mapping $\vartheta_2\colon\mathbb{R}^n\rightarrow\mathbb{R}^p$ is Lipschitz continuous.

As major constraint qualifications, we impose the {\em calmness} property at the reference points of the following set-valued mappings:
\begin{equation}\label{X-K-S}
\begin{array}{rll}
\Phi^K(u)&:=&\big\{(x,y)\in\Theta_2\big|\;\vartheta_2(x,y)+u\in\Lambda_2\big\},\\
\Phi^S(u)&:=&\big\{(x,y)\in\gph K\big|\;f(x,y)-\varphi(x)+u\le 0\big\},
\end{array}
\end{equation}
where $\varphi$ is the lower-level optimal value function defined in \eqref{OptimalValueFunction}.

The aforementioned result by Robinson \cite{Robinson1981} ensures the calmness of $\Phi^K$) (resp.\ $\Phi^S$) at any point of the graph provided that the sets $\Theta_2$ and $\Lambda_2$ (resp.\ $\text{gph}\,K$) are polyhedral and the functions $\vartheta_2$ (resp.\ $f$ and $\varphi$) are linear with respect to their variables. Note that $\varphi$  is piecewise linear if, e.g., the set-valued mapping $K$ is defined by a system of linear equalities and/or inequalities while the cost function $f$ is also linear.

Some results obtained below to verify the local Lipschitz continuity of, respectively, the lower-level and two-level value functions in the bilevel programs under consideration impose the following pointbased constraint qualifications:
\begin{equation}\label{coderivative criterion for K}
\Big[(x^*,0)\in\partial\langle u,\vartheta_2\rangle(\bar{x},\bar{y})+N_{\Theta_2}(\bar{x},\bar{y})\;\mbox{ with }\;u\in N_{\Lambda_2}(\vartheta_2(\bar{x},\bar{y}))\Big]\Longrightarrow x^*=0,
\end{equation}
\begin{equation}\label{coderivative criterion for S}
\left.\begin{array}{r}
(x^*,0)\in r\partial(f-\varphi)(\bar{x},\bar{y})+\partial\langle u,\vartheta_2\rangle(\bar{x},\bar{y})+N_{\Theta_2}(\bar{x},\bar{y})\\
r\ge 0,\;u\in N_{\Lambda_2}\big(\vartheta_2(\bar{x},\bar{y})\big)
\end{array}\right\}\Longrightarrow x^*=0.
\end{equation}
These conditions are related to implementing the coderivative criterion \eqref{cc} for the set-valued mappings $K$ and $S$, respectively. It is worth mentioning that standard constraint qualifications as, e.g., the Mangasarian-Fromovitz one, fail to hold for the constraint structure $y\in S(x)$ with $S$ taken from \eqref{Bilevel0la} and $K$ described by simple smooth functions via inequality constraints in the lower-level optimal value function reformulation of bilevel programs; see \cite{DempeZemkohoGenMFCQ} for more discussions. However, condition \eqref{coderivative criterion for S} can be satisfied to provide therefore a valuable constraint qualification in bilevel programming. To illustrate it, we recall the following typical statement in this direction taken from \cite{DempeMordukhovichZemkohoTwo-level}.

\begin{prop}[\bf validity of the coderivative-based constraint qualification]\label{codCQ}
{\em Consider $\varphi$ in \eqref{OptimalValueFunction} with $K(x):=\{y\in\mathbb{R}^m|\;g(y)\le 0\}$, where the functions $f$ and $g$  are convex and smooth. Take $(\bar{x},\bar{y})$  such that $\varphi(\bar{x})<\infty$ and the mapping $M(u):=\{y\in\mathbb{R}^m|\;g(y)+u\le 0\}$ is calm at $(0,\bar{y})$. Then the constraint qualification \eqref{coderivative criterion for S} holds at $(\bar{x},\bar{y})$.}
\end{prop}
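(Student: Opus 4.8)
The plan is to verify condition \eqref{coderivative criterion for S} by unwinding its left-hand side under the stated convexity and smoothness hypotheses and showing that the only compatible $x^*$ is zero. In the present setting $\vartheta_2$, $\Theta_2$, $\Lambda_2$ play no role (the lower-level feasible set is $K(x)=\{y\mid g(y)\le 0\}$, independent of $x$ and not built through $\vartheta_2$), so the only surviving term is $r\,\partial(f-\varphi)(\bar x,\bar y)$ with $r\ge 0$. Thus I must show that $(x^*,0)\in r\,\partial(f-\varphi)(\bar x,\bar y)$ forces $x^*=0$. Since $f$ is smooth, $\partial(f-\varphi)(\bar x,\bar y)\subset\nabla f(\bar x,\bar y)+\partial(-\varphi)(\bar x,\bar y)$ by the subdifferential sum rule; and because $\varphi$ depends only on $x$ through the constraint $g(y)\le 0$ being $x$-free, one checks that $\varphi$ is in fact constant in $x$ here — wait, more carefully: $\varphi(x)=\min\{f(x,y)\mid g(y)\le 0\}$ does depend on $x$ via $f$. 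So I should keep the full marginal-function machinery.

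The key steps, in order, are as follows. First, invoke the basic subdifferential upper estimate for the marginal function $\varphi(x)=\min_y\{f(x,y)\mid g(y)\le 0\}$: under inner semicompactness of the (here uniformly relevant) solution map and the calmness of $M$ at $(0,\bar y)$, one has $\partial(-\varphi)(\bar x)$ — and more to the point the joint object needed in \eqref{coderivative criterion for S} — estimated through the Lagrangian of the lower-level problem, i.e. through multipliers $\lambda\ge 0$ with $\lambda^\top g(\bar y)=0$ satisfying $0\in\nabla_y f(\bar x,\bar y)+\sum_i\lambda_i\nabla g_i(\bar y)$ (here $\varphi(\bar x)<\infty$ guarantees feasibility and the convex KKT system is available). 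Second, substitute this estimate into \eqref{coderivative criterion for S}: the second-coordinate equation ``$0\in(\cdots)$ in the $y$-component'' becomes precisely the stationarity condition $r\big(\nabla_y f(\bar x,\bar y)-(-\sum_i\lambda_i\nabla g_i(\bar y))\big)=0$, which is automatically satisfied by the KKT multipliers, while the first-coordinate relation reads $x^*\in r\big(\nabla_x f(\bar x,\bar y)-\nabla_x f(\bar x,\bar y)\big)=\{0\}$, because $\varphi$ only enters through $f(\cdot,y)$ whose $x$-gradient cancels with that of $f$. Hence $x^*=0$, which is exactly \eqref{coderivative criterion for S}.

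The main obstacle is handling the nonsmooth term $\partial(f-\varphi)$ correctly: one cannot naively write $\partial(f-\varphi)=\nabla f-\nabla\varphi$ since $\varphi$ need not be differentiable, nor is the sum rule $\partial(f-\varphi)\subset\partial f+\partial(-\varphi)$ tight enough by itself — the cancellation between the $f$-part and the marginal-function part must be extracted from the \emph{joint} coderivative/subdifferential estimate of $\varphi$ expressed via the lower-level KKT system, using convexity to guarantee that the upper estimate is in force with the correct (nonempty) multiplier set, for which the calmness of $M$ at $(0,\bar y)$ is the enabling constraint qualification. Once the marginal-function estimate is invoked in its sharp ``$\nabla_x f(\bar x,\bar y)$ minus marginal contribution'' form, the cancellation in the $x$-component is immediate and the $y$-component equation is consistent, delivering $x^*=0$; I would cite the relevant marginal-function subdifferential result and Proposition-type calmness statement from \cite{DempeMordukhovichZemkohoTwo-level} to close the argument without re-deriving the estimate here.
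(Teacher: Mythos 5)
The paper states Proposition~\ref{codCQ} without proof, recalling it from \cite{DempeMordukhovichZemkohoTwo-level}, so there is no in-paper argument to compare against; judged on its own merits, your proposal is correct in substance and follows the standard route. Two points need tightening. First, your opening reduction to ``$(x^*,0)\in r\,\partial(f-\varphi)(\bar x,\bar y)$'' is not right as stated: with $\vartheta_2(x,y)=g(y)$, $\Theta_2=\mathbb{R}^n\times\mathbb{R}^m$ and $\Lambda_2=\mathbb{R}^p_-$, the term $\partial\langle u,\vartheta_2\rangle(\bar x,\bar y)=\bigl\{\bigl(0,\sum_i u_i\nabla g_i(\bar y)\bigr)\bigr\}$ does survive in the $y$-component; what is true is only that it contributes nothing to the $x$-component, which is all that is needed to conclude $x^*=0$ (your second paragraph effectively carries this out correctly, and note that consistency of the $y$-component equation is irrelevant for proving the implication in \eqref{coderivative criterion for S}). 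Second, the decisive cancellation $x^*\in r\bigl(\nabla_x f(\bar x,\bar y)-\nabla_x f(\bar x,\bar y)\bigr)=\{0\}$ requires that the marginal-function contribution be evaluated at the point $\bar y$ itself: you need $\bar y\in S(\bar x)$ (implicit in the proposition, since \eqref{coderivative criterion for S} is only invoked on $\gph S$) together with the joint convexity of $f$ and the $x$-independence of the set $\{y\,|\,g(y)\le 0\}$, which give $\partial\varphi(\bar x)=\{\nabla_x f(\bar x,y_0)\}$ for \emph{every} $y_0\in S(\bar x)$ and hence, via \eqref{convex hull property 1}, $\partial(-\varphi)(\bar x)\subset-\partial\varphi(\bar x)=\{-\nabla_x f(\bar x,\bar y)\}$; without convexity the upper estimate of $\partial(-\varphi)(\bar x)$ involves convex combinations over possibly different minimizers and the exact cancellation fails. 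With these two repairs your argument is complete, and the calmness of $M$ enters exactly where you place it, namely to guarantee the KKT (multiplier) representation of the lower-level normal cone.
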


The next theorem is the first result in the literature that provides a local sensitivity analysis of the two-level value function \eqref{2level} in bilevel programs described by Lipschitzian functions. In its formulation we use a mild assumption about inner semicompactness of the solution map associated with $\varphi_{\circ}$ and defined by
\begin{equation}\label{S-o}
S_{\circ}(x):=\arg\underset{y}\min\big\{F(x,y)\big|\;y\in S(x)\big\}=\big\{y\in S(x)\big|\;F(x,y)\le\varphi_{\circ}(x)\big\}.
\end{equation}

\begin{thm}[\bf subdifferential estimate and Lipschitz continuity of the two-level value function under inner semicompactness]\label{inner semicompactness} {\em Let in the framework of \eqref{2level} the functions $f$, $\vartheta_2$, and $F$ be Lipschitz continuous around $(\bar{x},y)$ with $y\in S(\bar{x})$ and $(\bar{x}, y)$ with $y\in S_{\circ}(\bar{x})$, respectively. Assume that the sets $\Theta_2$ and $\Lambda_2$ are closed and the set-valued mapping $\Phi^K$ $($resp.\ $\Phi^S)$ is calm at $(0,\bar{x},y)$ with $y\in S(\bar{x})$ (resp.\ at $(0,\bar{x},y)$ with $y\in S_{\circ}(\bar{x})$). Suppose also that $S_{\circ}$ is inner semicompact at $\bar{x}$ and that the constraint qualifications \eqref{coderivative criterion for K} and \eqref{coderivative criterion for S} are satisfied at $(\bar{x},y)$ with $y\in S(\bar{x})$ and $(\bar{x},y)$ with $y\in S_{\circ}(\bar{x})$, respectively. Then the two-level value function $\varphi_{\circ}$ is Lipschitz continuous around $\bar{x}$ and its basic subdifferential \eqref{Basic Subdifferential} at $\ox$ satisfies the following upper estimate:
\begin{equation*}\label{subdif of phi-o detailed}
\begin{array}{l}
\partial\varphi_{\circ}(\bar{x})\subset\underset{y\in S_{\circ}(\bar{x})}\bigcup\;\,\underset{u\in N_{\Lambda_2}(\vartheta_2(\bar{x},y))}\bigcup\;\,\underset{r\ge 0}\bigcup \Big\{x^*\big|\;\disp\sum^{n+1}_{s=1}v_s=1,\\
\qquad\qquad v_s\ge 0,\;u_s\in N_{\Lambda_2}(\vartheta_2(\bar{x},y_s)),\,y_s\in S(\bar{x}),\;s=1,\ldots,n+1,\\
\qquad\qquad(x^{*}_s,0)\in\partial f(\bar{x},y_s)+\partial\langle u_s,\vartheta_2\rangle(\bar{x},y_s)+N_{\Theta_2}(\bar{x},y_s),\;s=1,\ldots,n+1,\\
\qquad\qquad\big(x^*+r\disp\sum^{n+1}_{s=1}v_s x^{*}_s,0\big)\in\partial F(\bar{x},y)+r\partial f(\bar{x},y)+\partial\langle u,\vartheta_2\rangle(\bar{x},y)+N_{\Theta_2}(\bar{x},y)\Big\}.
\end{array}
\end{equation*}}
\end{thm}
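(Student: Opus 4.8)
The strategy is to unwind the two-level value function $\varphi_\circ$ into a two-stage composition of marginal functions and to apply the known subdifferential estimates for optimal value functions (the Mordukhovich--Nam type formula) twice, together with the coderivative criterion \eqref{cc} to deliver the Lipschitz continuity at each stage. First I would rewrite the inner solution map $S(x)$ in the lower-level optimal value function form $S(x)=\{y\mid (x,y)\in\gph K,\ f(x,y)-\varphi(x)\le 0\}$, so that $\gph S$ is exactly the constraint set appearing in $\Phi^S$. At this stage the calmness of $\Phi^K$ at $(0,\bar x,y)$ for $y\in S(\bar x)$, the constraint qualification \eqref{coderivative criterion for K}, and the Lipschitz continuity of $f,\vartheta_2$ let me invoke the established estimates for the marginal function $\varphi$ from \eqref{OptimalValueFunction}: $\varphi$ is locally Lipschitz around $\bar x$ and its subdifferential is bounded above by the set $\{x^*\mid (x^*,0)\in\partial f(\bar x,y_s)+\partial\langle u_s,\vartheta_2\rangle(\bar x,y_s)+N_{\Theta_2}(\bar x,y_s),\ u_s\in N_{\Lambda_2}(\vartheta_2(\bar x,y_s))\}$ over $y_s\in S(\bar x)$. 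This is precisely the building block that produces the $y_s$, $u_s$, $x^*_s$, $v_s$ data in the final formula; the convex-combination coefficients $v_s$ with $\sum_{s=1}^{n+1}v_s=1$ and the $n+1$ range arise from taking the convex hull via Carathéodory's theorem, which is forced by the passage to the Clarke generalized gradient $\bar\partial\varphi$ inside the constraint $f(x,y)-\varphi(x)\le 0$ when one computes $\partial(f-\varphi)$.

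Next, treating $\varphi_\circ(x)=\min\{F(x,y)\mid y\in S(x)\}$ as a marginal function over the now-described feasible map $S$, I would apply the same marginal-function machinery a second time. The inner semicompactness of $S_\circ$ at $\bar x$ supplies the outer union over $y\in S_\circ(\bar x)$ and guarantees that limiting subgradients of $\varphi_\circ$ are captured by subgradients localized at such minimizers. The calmness of $\Phi^S$ at $(0,\bar x,y)$ for $y\in S_\circ(\bar x)$ is the constraint qualification that justifies the normal-cone intersection/sum rule for $\gph S$, yielding terms of the form $r\,\partial(f-\varphi)(\bar x,y)+\partial\langle u,\vartheta_2\rangle(\bar x,y)+N_{\Theta_2}(\bar x,y)$ with the multiplier $r\ge 0$ attached to the value-function inequality constraint. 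Combining this with $\partial F(\bar x,y)$ and substituting the first-stage estimate for $\partial\varphi$ (which produces the $r\sum_{s=1}^{n+1}v_s x^*_s$ correction term, since $\partial(f-\varphi)\subset\partial f+\bar\partial(-\varphi)$ and $\bar\partial(-\varphi)=-\bar\partial\varphi$ by \eqref{convex hull property 1}) gives exactly the displayed inclusion. The Lipschitz continuity of $\varphi_\circ$ around $\bar x$ follows from the coderivative criterion \eqref{cc} applied to $S$: constraint qualification \eqref{coderivative criterion for S}, which is the image of $D^*S(\bar x,y)(0)=\{0\}$ after the calmness-based normal cone computation, together with the Lipschitz continuity of $F$ and the inner semicompactness of $S_\circ$, forces $\partial^\infty\varphi_\circ(\bar x)=\{0\}$ and hence local Lipschitz continuity.

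The main obstacle, and the step requiring the most care, is the bookkeeping in the second application: the feasible map $S$ is itself described through the nonsmooth, only-locally-Lipschitz function $\varphi$, so one cannot simply cite a formula for $\partial(f-\varphi)$ but must pass through the Clarke generalized gradient $\bar\partial$ to get a sum rule, and then re-expand $\bar\partial\varphi$ using the first-stage estimate while tracking how the convexification interacts with the scalar multiplier $r$. Concretely, the delicate point is showing that $r\,\bar\partial\varphi(\bar x)\subset r\,\mathrm{co}\bigcup_{y_s}\{x^*_s:\dots\}=\{r\sum_{s=1}^{n+1}v_s x^*_s:\sum v_s=1,v_s\ge 0,\dots\}$ is the correct and tight way to absorb the first-stage data into the second-stage inclusion; this is where Carathéodory's bound $n+1$ on the number of extreme points enters, and where one must be careful that the multipliers $u_s$ and points $y_s$ for the $\varphi$-part are genuinely independent of the multiplier $u$ and point $y$ for the $F$-minimization. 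The remaining verifications — that $\Phi^S$'s calmness transfers to calmness of the perturbed feasible set in the $\varphi_\circ$ problem, and that the two coderivative criteria compose to give the Lipschitz conclusion — are routine once this core reduction is set up correctly.
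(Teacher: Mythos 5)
Your proposal is correct and follows essentially the same route as the paper's proof: a two-stage application of the marginal-function subdifferential estimates (first to $\varphi$ over $K$, then to $\varphi_{\circ}$ over $S$), with the calmness of $\Phi^K$ and $\Phi^S$ driving the normal-cone computations for $\gph K$ and $\gph S$, the coderivative criterion \eqref{cc} under \eqref{coderivative criterion for K} and \eqref{coderivative criterion for S} yielding the Lipschitz conclusions, and the convex-hull symmetry \eqref{convex hull property 1} plus Carath\'eodory's theorem producing the $v_s$, $y_s$, $u_s$, $x^*_s$ data with the $n+1$ bound. The only cosmetic difference is that you phrase the final Lipschitz continuity of $\varphi_{\circ}$ via $\partial^\infty\varphi_{\circ}(\bar{x})=\{0\}$, while the paper cites the marginal-function stability result directly from the Lipschitz-like property of $S$; these are equivalent.
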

\begin{proof}
Pick $\bar{x}^*\in\partial\varphi_{\circ}(\bar{x})$ and deduce from \cite[Theorem~7(ii)]{MordukhovichNamYenSubgradients2009}, by taking into account the inner semicompactness of $S_{\circ}$ at $\bar{x}$ and the Lipschitz continuity of $F$ around $(\bar{x},y)$ with $y\in S_{\circ}(\bar{x})$, the existence of $y\in S_{\circ}(\bar{x})$ with $\bar{x}^*\in x^*+D^*S(\bar{x},y)(y^*)$ for some $(x^*,y^*)\in\partial F(\bar{x},y)$. Noting then that $\gph K=\Phi^K(0)$ and applying \cite[Theorem~4.1]{HenrionJouraniOutrataCalmness2002}, we get that for any $(x^*,y^*)\in N_{\text{gph}\,K}(\bar{x},y)$ there exists $u\in N_{\Lambda_2}(\vartheta_2(\bar{x},y))$ such that $(x^*,y^*)\in\partial\langle u,\vartheta_2\rangle (\bar{x}, y)+N_{\Theta_2}(\bar{x},y)$ with $y\in S(\bar{x})$. This uses the assumed closedness of $\Theta_2$ and $\Lambda_2$, Lipschitz continuity of $\vartheta_2$, and calmness of $\Phi^K$. It follows from this estimate and the coderivative definition \eqref{cod-definition} that for $ x^*\in D^*K(\bar{x},y)(y^*)$ there exist $u\in N_{\Lambda_2}(\vartheta_2(\bar{x}, y))$ satisfying
\begin{eqnarray*}
(x^*,-y^*)\in\partial\langle u,\vartheta_2\rangle(\bar{x},y)+N_{\Theta_2}(\bar{x},y).
\end{eqnarray*}
A clear consequence of this implication tells us that the constraint qualification \eqref{coderivative criterion for K} is a sufficient condition for the coderivative criterion \eqref{cc} to hold for the set-valued mapping $K$ at $(\bar{x},y)$ with $y\in S(\bar{x})$. Combining the latter fact with the inner semicompactness of $S$ (obtained from that of $S_{\circ}$ since $S_{\circ}(x)\subseteq S(x)$ for all $x\in X$), we derive from \cite[Theorem 5.2 (ii)]{MordukhovichNamVariational2005} the Lipschitz continuity of the lower-level value function $\varphi$ around $\bar{x}$.

Repeating now the above process in the case of $S$ and observing that $\gph S=\Phi^S(0)$, we get that for any $(x^*,y^*)\in N_{\text{gph}\,S}(\bar{x},y)$ with $y\in S_{\circ}(\bar{x})$ there is $r\ge 0$ such that
\begin{eqnarray*}
(x^*,y^*)\in r\partial(f-\varphi)(\bar{x},y)+N_{\text{gph}\,K}(\bar{x},y)
\end{eqnarray*}
under the imposed calmness assumption on $\Phi^S$ at $(\bar{x},y)$ and the established local Lipschitz continuity of the lower-level value function $\ph$ around $\ox$. Combining the latter expression with the one obtained above in the case of $K$, it follows from the coderivative definition that for any $x^*\in D^*S(\bar{x},y)(y^*)$ there are $r\ge 0$ and $u\in N_{\Lambda_2}(\vartheta_2(\bar{x},y))$ such that
\begin{eqnarray}\label{inc1}
(x^*,-y^*)\in r\partial(f-\varphi)(\bar{x},y)+\partial\langle u,\vartheta_2\rangle(\bar{x},y)+N_{\Theta_2}(\bar{x},y).
\end{eqnarray}
This allows us to conclude under the assumptions of the theorem that the imposed constraint qualification condition \eqref{coderivative criterion for S} at $(\bar{x},y)$ implies the validity of the coderivative criterion \eqref{cc} for $S$ at the same point and therefore the Lipschitz-like property of $S$ around it. Invoking again \cite[Theorem~5.2(ii)]{MordukhovichNamVariational2005} tells us that the two-level value function $\varphi_{\circ}$ is locally Lipschitz continuous around $\bar{x}$.

The presence of the lower-level value function $\varphi$ in \eqref{inc1} gives us room for a further description of the upper bound for $D^*S(\bar{x},y)(y^*)$ in terms of problem data. Since $\ph$ is a marginal function \cite{MordukhovichBook2006}, it follows from \cite[Theorem~3.38]{MordukhovichBook2006}, the structure of $S$ with $K$ in \eqref{K(x)}, and the normal cone calculus rules in \cite[Theorems~3.4 and 3.8]{MordukhovichBook2006} that for any $x^*\in\partial\varphi(\bar{x})$ we can find $y\in S(\bar{x})$ and $u\in N_{\Lambda_2}(\vartheta_2(\bar{x},y))$ satisfying the inclusion
\begin{eqnarray*}
(x^*,0)\in\partial f(\bar{x},y)+\partial\langle u,\vartheta_2\rangle(\bar{x},y)+N_{\Theta_2}(\bar{x},y).
\end{eqnarray*}
Furthermore, the verified local Lipschitz continuity of $\varphi$ around $\bar{x}$ ensures that
\begin{eqnarray*}
\partial(-\varphi)(\bar{x})\subset\co\partial(-\varphi)(\bar{x})=-co\partial\varphi(\bar{x}),
\end{eqnarray*}
where the equality holds due to the convex hull symmetry property \eqref{convex hull property 1}. Applying now the classical Carath\'eodory's convex hull theorem in $\mathbb{R}^n$ leads us to the following upper estimate of the basic subdifferential of $-\varphi$: for any $x^*\in\partial(-\varphi)(\bar{x})$ there are $v_s\ge 0$, $u_s\in N_{\Lambda_2}(\vartheta_2(\bar{x},y_s))$, $y_s\in S(\bar{x})$ as $s=1,\ldots,n+1$, and
\begin{eqnarray*}
(x^{*}_s,0)\in\partial f(\bar{x},y_s)+\partial\langle u_s,\vartheta_2\rangle(\bar{x},y_s)+N_{\Theta_2}(\bar{x},y_s)\;\mbox{ with }\;\sum^{n+1}_{s=1}v_s=1
\end{eqnarray*}
such that $x^*=-\sum^{n+1}_{s=1}v_s x^{*}_s$. Employing this together with the subdifferential sum rule \cite[Theorem~2.33]{MordukhovichBook2006} in inclusion \eqref{inc1}, we get that for any $x^*\in D^*S(\bar{x},y)(y^*)$ there are $v_s\ge 0$, $u_s\in N_{\Lambda_2}(\vartheta_2(\bar{x},y_s))$, $y_s\in S(\bar{x})$, and
\begin{eqnarray*}
(x^*_s,0)\in\partial f(\bar{x},y_s)+\partial\langle u_s,\vartheta_2\rangle(\bar{x},y_s)+N_{\Theta_2}(\bar{x},y_s)
\end{eqnarray*}
with $s=1,\ldots,n+1$ and $\sum^{n+1}_{s=1}v_s=1$ such that
\begin{eqnarray*}
\Big(x^*+r\sum^{n+1}_{s=1}v_sx^{*}_s,-y^*\Big)\in r\partial f(\bar{x},y)+\partial\langle u,\vartheta_2\rangle(\bar{x},y)+N_{\Theta_2}(\bar{x},y).
\end{eqnarray*}
This verifies the claimed estimate for $\partial\ph_{\circ}$ and completes the proof of the theorem.
\end{proof}

It is definitely appealing to simplify the subdifferential upper estimate for the two-level value function $\ph_{\circ}$ in Theorem~\ref{inner semicompactness} by avoiding the convex hull therein. There are three approaches in the bilevel programming literature to proceed in this direction for the lower-level value function $\ph$ in optimistic programs with smooth data. One way is to assume the {\em full convexity} of the lower-level functions as in \cite{DempeDuttaMordukhovichNewNece,dmn10,mor18,ZemkohoMinmax2011}. The second one is imposing the {\em inner semicontinuity} property of the solution map instead of its inner semicompactness (cf.\ \cite{DempeDuttaMordukhovichNewNece,mor18,MordukhovichNamVariational2005}), and the third approach exploits a {\em difference rule} for regular/Fr\'echet subgradients as in \cite{mor18,MordukhovichNamYenFrechetSubgradients2006}. In what follows we extend the first two approaches to the case of the two-level value function $\ph_\circ$.

The next theorem deals with the fully convex case in the lower-level problem. To simplify the presentation, we use the following multipliers sets:
\begin{eqnarray}\label{Lambda (x,y)}
\qquad \Lambda(\bar{x},y):=\Big\{\gamma\Big|\;0\in\partial_y f(\bar{x},y)+\disp\sum^p_{i=1}\gamma_i\partial_y g_i(\bar{x},y),\;\gamma\ge 0,\;\la\gamma,g(\bar{x},y)\ra=0\Big\},
\end{eqnarray}
\begin{eqnarray}\label{Lambda1}
\begin{array}{ll}
\Lambda^{\circ}(\bar{x},y)&:=\Big\{(r,\beta)\Big|\;r\ge 0,\;\beta\ge 0,\;\la\beta,g(\bar{x},y)\ra=0,\\
&\qquad\qquad\qquad 0\in\partial_y F(\bar{x},y)+r\partial_y f(\bar{x},y)+\disp\sum^p_{i=1}\beta_i\partial_y g_i(\bar{x},y)\Big\}.
\end{array}
\end{eqnarray}

\begin{thm}[\bf subdifferential estimate and Lipschitz continuity of the two-level value function under full convexity]\label{convex case} {\em Let the bilevel program \eqref{Bilevel0la}, \eqref{K(x)} be fully convex, i.e., the functions $f,\,\vartheta_2:=g$, and $F$ are convex in $(x,y)$ with $\Theta_2:=\mathbb{R}^n\times\mathbb{R}^m$ and $\Lambda_2:=\mathbb{R}^p_-$. Assume that the set-valued mapping $\Phi^K$ $($resp.\ $\Phi^S)$ is calm at $(0,\bar{x},y)$ with $y\in S(\bar{x})$ $($resp.\ $(0,\bar{x},y)$ with $y\in S_{\circ}(\bar{x})$), that $S_{\circ}$ is inner semicompact $\bar{x}\in\dom\varphi$, and that the constraint qualification \eqref{coderivative criterion for S} is satisfied at  $(\bar{x},y)$ with $y\in S_{\circ}(\bar{x})$. Then the two-level value function $\varphi_{\circ}$ is Lipschitz continuous around $\bar{x}$ with the subdifferential estimate
\begin{equation*}
\begin{array}{rl}
\partial\varphi_{\circ}(\bar{x})\subset&\underset{y\in S_{\circ}(\bar{x})}\bigcup\,\,\underset{(r,\beta)\in\Lambda^\circ(\bar{x},y)}\bigcup\,\,\underset{\gamma\in\Lambda(\bar{x},y)}\bigcup\Big\{\partial_x F(\bar{x},y)+r\big(\partial_x f(\bar{x},y)-\partial_x f(\bar{x},y)\big)\\
&\qquad\qquad\qquad\qquad\qquad\qquad+\;\disp\sum^p_{i=1}\beta_i\partial_x g_i(\bar{x},y)-r\disp\sum^p_{i=1}\gamma_i\partial_x g_i(\bar{x},y)\Big\}.
\end{array}
\end{equation*}}
\end{thm}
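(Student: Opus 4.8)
The plan is to specialize the general estimate of Theorem~\ref{inner semicompactness} to the fully convex setting and to exploit convexity to replace the geometric/coderivative objects by explicit Lagrange-multiplier descriptions. First I would observe that under full convexity with $\Theta_2:=\mathbb{R}^n\times\mathbb{R}^m$, $\Lambda_2:=\mathbb{R}^p_-$, and $\vartheta_2:=g$, the normal cone $N_{\Theta_2}(\bar{x},y)=\{0\}$, and for $u\in N_{\Lambda_2}(g(\bar{x},y))$ the complementarity $u\ge 0$, $\la u,g(\bar{x},y)\ra=0$ holds; moreover the scalarization $\partial\langle u,\vartheta_2\rangle(\bar{x},y)$ splits as $\sum_{i=1}^p u_i\partial g_i(\bar{x},y)$ by convex subdifferential calculus, and further into $x$- and $y$-blocks since the $g_i$ are convex in the joint variable. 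Thus the constraint qualification \eqref{coderivative criterion for K} is automatic here (it reduces to a statement that only involves $x^*=0$), which is why it is dropped from the hypotheses; only \eqref{coderivative criterion for S} must be assumed. I would note that inner semicompactness of $S_\circ$ at $\bar x\in\dom\varphi$ passes to $S$ since $S_\circ(x)\subseteq S(x)$, exactly as in the proof of Theorem~\ref{inner semicompactness}.

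Next I would re-run the three-stage argument of Theorem~\ref{inner semicompactness}: (i) the outer estimate $\bar x^*\in x^*+D^*S(\bar x,y)(y^*)$ with $(x^*,y^*)\in\partial F(\bar x,y)$ coming from \cite[Theorem~7(ii)]{MordukhovichNamYenSubgradients2009}; (ii) under the calmness of $\Phi^S$ and the already-derived Lipschitz continuity of $\varphi$, the inclusion \eqref{inc1}, which in the convex case becomes
\begin{equation*}
(x^*,-y^*)\in r\partial(f-\varphi)(\bar{x},y)+\sum_{i=1}^p u_i\partial g_i(\bar{x},y),\quad r\ge 0,\ u\ge 0,\ \la u,g(\bar x,y)\ra=0;
\end{equation*}
(iii) the fact that \eqref{coderivative criterion for S} then forces the Mordukhovich criterion \eqref{cc} for $S$ and hence the local Lipschitz continuity of $\varphi_\circ$ via \cite[Theorem~5.2(ii)]{MordukhovichNamVariational2005}. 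The genuinely convex input enters in describing $\partial\varphi(\bar x)$: because the lower-level problem is convex and $\varphi$ is Lipschitz around $\bar x$, the marginal-function formula yields $\partial\varphi(\bar x)\subset\bigcup_{y\in S(\bar x)}\bigcup_{\gamma\in\Lambda(\bar x,y)}\big(\partial_x f(\bar x,y)+\sum_i\gamma_i\partial_x g_i(\bar x,y)\big)$, where $\Lambda(\bar x,y)$ from \eqref{Lambda (x,y)} is exactly the KKT multiplier set for the lower-level problem at $y$; the $0$ in the $y$-component is absorbed by the stationarity condition defining $\Lambda(\bar x,y)$. Likewise, the pair $(r,\beta)\in\Lambda^\circ(\bar x,y)$ records the multipliers coming from combining $r\partial_y f$, $\partial_y F$, and $\sum_i\beta_i\partial_y g_i$ in the $y$-block of \eqref{inc1} (after distributing $u$ and the $\varphi$-part), which is precisely \eqref{Lambda1}.

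Assembling these, I would take $\bar x^*\in\partial\varphi_\circ(\bar x)$, write $\bar x^*\in\partial_x F(\bar x,y)+\big(x\text{-block of }r\partial(f-\varphi)(\bar x,y)\big)+\sum_i\beta_i\partial_x g_i(\bar x,y)$, and then replace $r\partial_x(f-\varphi)(\bar x,y)$ using $\partial_x(f-\varphi)\subset\partial_x f+\partial_x(-\varphi)$ together with the convex-hull symmetry \eqref{convex hull property 1}, $\partial(-\varphi)(\bar x)\subset-\co\partial\varphi(\bar x)$, and the estimate for $\partial\varphi$ above; since $\co$ of a set of the stated form, intersected with the requirement that only the realized $\bar x^*$ be produced, can be carried through the union over $\gamma$, this contributes the term $r\big(\partial_x f(\bar x,y)-\partial_x f(\bar x,y)\big)-r\sum_i\gamma_i\partial_x g_i(\bar x,y)$, matching the claimed formula. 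The subdifferential sum rule \cite[Theorem~2.33]{MordukhovichBook2006} legitimizes the splitting throughout.

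The main obstacle I anticipate is bookkeeping the convex-hull step cleanly: in Theorem~\ref{inner semicompactness} one needs Carath\'eodory and a full simplex of points $y_s$, whereas here the claim is stated with a single $y$ and a single $\gamma$, so I must argue that full convexity of $f$ and the $g_i$ makes the relevant set $\partial_x f(\bar x,y)-\co\partial_x\varphi(\bar x)$ already convex-valued along each branch, so that the Carath\'eodory expansion collapses; equivalently, the term $r(\partial_x f-\partial_x f)$ is itself convex and symmetric, absorbing the convexification. Handling the interchange of "$\co$" with the unions over $y\in S_\circ(\bar x)$ and over the multiplier sets, and confirming that the $y$ selected in stage (i) is the one appearing in every block, is the delicate part; everything else is a direct convex-analytic specialization of the already-proven smooth-data template.
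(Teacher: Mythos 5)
Your overall strategy is the same as the paper's: rerun the three-stage argument of Theorem~\ref{inner semicompactness} (marginal-function estimate for $\partial\varphi_\circ$, calmness of $\Phi^S$ plus Lipschitz continuity of $\varphi$ to bound $D^*S$, coderivative criterion via \eqref{coderivative criterion for S} for the Lipschitz conclusion), then translate the coderivative objects into KKT multipliers using $N_{\Theta_2}=\{0\}$, the complementarity description of $N_{\mathbb{R}^p_-}$, and the decomposition $\partial\psi(x,y)\subset\partial_x\psi(x,y)\times\partial_y\psi(x,y)$ for jointly convex functions. All of that matches the paper.

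The one place where your argument would not go through as written is precisely the point you flag as delicate: the collapse of the Carath\'eodory expansion. You propose to justify it by saying that each branch $\partial_x f(\bar x,y)-\co\partial_x\varphi(\bar x)$ is convex-valued and that the term $r\big(\partial_x f-\partial_x f\big)$ is ``convex and symmetric, absorbing the convexification.'' That does not work: convexity of the individual branches of a union does not let you replace the convex hull of the union over the $n+1$ points $y_s\in S(\bar x)$ by a single branch. The fact the paper actually uses --- and which you never state --- is that under full convexity the lower-level value function $\varphi$ is itself \emph{convex} (as the infimal projection of the jointly convex function $f+\delta_{\gph K}$). Hence $\partial\varphi(\bar x)$ is already a convex set, the symmetry property \eqref{convex hull property 1} degenerates to $\partial(-\varphi)(\bar x)\subset-\partial\varphi(\bar x)$ with no convex hulls at all, and the convex marginal-function formula (as in the proof of Theorem~4.1 of \cite{ZemkohoMinmax2011}, under the calmness of $\Phi^K$) represents every $x^*\in\partial\varphi(\bar x)$ with a single multiplier $\gamma\in\Lambda(\bar x,y)$ valid for \emph{any} minimizer $y\in S(\bar x)$ --- in particular for the same $y\in S_\circ(\bar x)\subset S(\bar x)$ produced in stage (i). This is what eliminates both the simplex of points $y_s$ and the weights $v_s$, and it is the single missing idea in your write-up; once it is inserted, the rest of your assembly of the final inclusion is correct.
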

\begin{proof}
It largely takes the same pattern as that of Theorem~\ref{inner semicompactness} with the following essential changes. We can easily check that the lower-level value function $\ph$ is convex under the assumptions made. Given $x^*\in\partial\varphi(\bar{x})$, this allows us--by taking into account the calmness of the set-valued mapping $\Phi^K$ at $(0,\bar{x},y)$ and basic convex analysis as in the proof of \cite[Theorem~4.1]{ZemkohoMinmax2011}--to find a vector $\gamma\in\Lambda(\bar{x},{y})$ satisfying
\begin{eqnarray*}
x^*\in\partial_x f(\bar{x},{y})+\disp\sum^p_{i=1}\gamma_i\partial_x g_i(\bar{x},{y})\;\mbox{ whenever }\;y\in S(\bar{x}).
\end{eqnarray*}
The second observation is that, by the convexity of $\varphi$, the symmetry property \eqref{convex hull property 1} reads
\begin{eqnarray*}
\partial(-\varphi)(\bar{x})\subset-\partial\varphi(\bar{x}).
\end{eqnarray*}
Combining it with the previous facts and using the decomposition property
\begin{eqnarray*}
\partial\psi(x,y)\subset\partial_x\psi(x,y)\times\partial_y\psi(x,y)
\end{eqnarray*}
valid for the convex functions $F$, $f$, and $g$, we complete the proof.
\end{proof}

The following statement also requires the convexity of the bilevel program data while drops some other assumptions of the previous one in a particular setting.

\begin{prop}[\bf sensitivity of the two-level value function for fully convex programs with parameter-independent lower-level solution sets]\label{simple bilevel value function}{\em Consider the bilevel program \eqref{optimistic 1} with $S(x):=S:=\arg\min\{f(y)|\;g(y)\le 0\}$ as $x\in X$, where the functions $f$, $g$, and $F$ convex in $y$ and $(x,y)$, respectively. Then the two-level value function $\ph_\circ$ is locally Lipschitzian around $\bar{x}\in\dom\varphi_{\circ}$, and we have
\begin{eqnarray*}
\partial\varphi_{\circ}(\bar{x})\subset\partial_x F(\bar{x},y)\;\mbox{ for any }\;y\in S_{\circ}(\bar{x}).
\end{eqnarray*}}
\end{prop}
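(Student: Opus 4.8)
The plan is to treat this as a degenerate special case of the machinery developed for Theorem~\ref{convex case}, exploiting the fact that $S(x)\equiv S$ is constant in $x$. First I would observe that since the lower-level feasible set and objective do not depend on $x$, the set $S$ is a fixed closed convex subset of $\mathbb{R}^m$ (convexity of $S$ follows from convexity of $f$ and $g$), and hence $\gph S = X\times S$ (or $\mathbb{R}^n\times S$), so its normal cone decomposes as $N_{\gph S}(\bar{x},y)=\{0\}\times N_S(y)$. In coderivative terms this says $D^*S(\bar{x},y)(y^*)=\{0\}$ for every $y^*\in\mathbb{R}^m$ (and in particular $D^*S(\bar{x},y)(0)=\{0\}$, so the coderivative criterion \eqref{cc} holds trivially and $S$ is automatically Lipschitz-like around every graph point). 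This is the key structural simplification that makes all the constraint qualifications and the lower-level value function $\varphi$ irrelevant here.

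Next I would invoke the subdifferential estimate for marginal functions, namely \cite[Theorem~7(ii)]{MordukhovichNamYenSubgradients2009}, exactly as in the proof of Theorem~\ref{inner semicompactness}: since $F$ is convex (hence locally Lipschitz) and $S_{\circ}$ is inner semicompact at $\bar{x}$ — which holds here because $S_{\circ}(x)\subseteq S(x)=S$ and $S$ is a fixed set, so $S_{\circ}$ is uniformly bounded around $\bar{x}$ provided $S$ is bounded, or more carefully one uses that $\bar{x}\in\dom\varphi_{\circ}$ together with boundedness of the relevant argmin; in any case inner semicompactness is inherited — we obtain for any $\bar{x}^*\in\partial\varphi_{\circ}(\bar{x})$ the existence of $y\in S_{\circ}(\bar{x})$ and $(x^*,y^*)\in\partial F(\bar{x},y)$ with $\bar{x}^*\in x^* + D^*S(\bar{x},y)(y^*)$. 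Since $D^*S(\bar{x},y)(y^*)=\{0\}$ by the previous step, this collapses to $\bar{x}^*\in x^*$ for some $(x^*,y^*)\in\partial F(\bar{x},y)$, i.e., $\bar{x}^*\in\{x^*\mid (x^*,y^*)\in\partial F(\bar{x},y)\text{ for some }y^*\}$. Finally, applying the convex-analysis decomposition $\partial F(\bar{x},y)\subset\partial_x F(\bar{x},y)\times\partial_y F(\bar{x},y)$ (valid since $F$ is convex in $(x,y)$) projects this onto $\bar{x}^*\in\partial_x F(\bar{x},y)$, which is the asserted inclusion. The Lipschitz continuity of $\varphi_{\circ}$ around $\bar{x}$ follows from the same marginal-function theorem once the Lipschitz-like property of $S$ is in hand, which as noted is automatic.

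The main obstacle I anticipate is purely a matter of care with the inner semicompactness hypothesis: the statement only assumes $\bar{x}\in\dom\varphi_{\circ}$ and does not explicitly require $S$ to be bounded, so one must argue that $S_{\circ}$ is nonetheless inner semicompact at $\bar{x}$ (or that the cited marginal-function result applies under the weaker "inner semicontinuity at the relevant point" that is available here). One clean route: by convexity of $f$ and $g$ the solution set $S$ is a fixed convex set, and $S_{\circ}(x)=\arg\min\{F(x,\cdot)\mid y\in S\}$; for $x$ near $\bar{x}$ and any selection $y_k\in S_{\circ}(x_k)$, coercivity-type arguments or the assumption implicit in "$\bar{x}\in\dom\varphi_{\circ}$" (finiteness and attainment of the min) give a bounded sequence, hence a convergent subsequence. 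Everything else — the normal cone computation for a product set, the coderivative of a constant multifunction, and the convex subdifferential decomposition — is routine and does not require the heavy calmness/coderivative-criterion apparatus of the preceding theorems, so this proposition is genuinely the easy corner of the theory.
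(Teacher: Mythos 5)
Your structural observations are sound ($\gph S=\mathbb{R}^n\times S$, so $N_{\gph S}(\bar{x},y)=\{0\}\times N_S(y)$ and the coderivative criterion \eqref{cc} holds for free), but the route you take through the marginal-function result \cite[Theorem~7(ii)]{MordukhovichNamYenSubgradients2009} has two genuine gaps. First, that theorem requires inner semicompactness of $S_{\circ}$ at $\bar{x}$, which is not among the hypotheses of the proposition and which you do not actually establish: $\bar{x}\in\dom\varphi_{\circ}$ gives only nonemptiness of $S_{\circ}(\bar{x})$, the set $S$ is not assumed bounded, and the ``coercivity-type arguments'' you gesture at are not available from the stated assumptions. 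The same objection hits your derivation of the Lipschitz continuity, which leans on the same inner-semicompact machinery. Second, and more decisively, even if inner semicompactness were granted, \cite[Theorem~7(ii)]{MordukhovichNamYenSubgradients2009} yields for each $\bar{x}^*\in\partial\varphi_{\circ}(\bar{x})$ the existence of \emph{some} $y\in S_{\circ}(\bar{x})$ with $\bar{x}^*\in\partial_x F(\bar{x},y)$, i.e.\ an inclusion into the union over $y\in S_{\circ}(\bar{x})$; the proposition asserts $\partial\varphi_{\circ}(\bar{x})\subset\partial_x F(\bar{x},y)$ for \emph{every} $y\in S_{\circ}(\bar{x})$, which is strictly stronger whenever $S_{\circ}(\bar{x})$ is not a singleton. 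Your argument cannot recover this quantifier.

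The missing idea is the convexity of $\varphi_{\circ}$ itself: since $F$ is jointly convex and $\mathbb{R}^n\times S$ is convex, the marginal function $\varphi_{\circ}$ is convex and finite near $\bar{x}$, hence locally Lipschitz there with no compactness or calmness apparatus at all. The paper then uses the tilt characterization of convex subgradients: if $u\in\partial\varphi_{\circ}(\bar{x})$ and $\bar{y}$ is an \emph{arbitrary} element of $S_{\circ}(\bar{x})$, then for all $(x,y)\in\mathbb{R}^n\times S$ one has $F(x,y)-\la u,x\ra\ge\varphi_{\circ}(x)-\la u,x\ra\ge\varphi_{\circ}(\bar{x})-\la u,\bar{x}\ra=F(\bar{x},\bar{y})-\la u,\bar{x}\ra$, so $(\bar{x},\bar{y})$ globally minimizes $F(x,y)-\la u,x\ra$ over $\mathbb{R}^n\times S$. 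The optimality condition $(u,0)\in\partial F(\bar{x},\bar{y})+\{0\}\times N_S(\bar{y})$ from \cite[Proposition~5.3]{MordukhovichBook2006} together with the decomposition $\partial F(\bar{x},\bar{y})\subset\partial_x F(\bar{x},\bar{y})\times\partial_y F(\bar{x},\bar{y})$ then gives $u\in\partial_x F(\bar{x},\bar{y})$ for that arbitrary $\bar{y}$. This is both more elementary than your coderivative route and delivers exactly the stated conclusion; I would rework the proof along these lines.
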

\begin{proof}
The assumptions made ensure the local Lipschitz continuity of the convex function $\varphi_{\circ}$ around $\ox$. Arguing as in the proof of \cite[Theorem~4.1]{ZemkohoMinmax2011} with taking into account Proposition~\ref{codCQ}, pick $\bar{y}\in S_{\circ}(\bar{x})$ and observe that for any $u\in\partial\varphi_{\circ}(\bar{x})$ the pair $(\bar{x},\bar{y})$ is an optimal solution to the nonlinear program
\begin{eqnarray}\label{nlp}
\underset{x,y}\min~\big\{F(x,y)-\langle u,x\rangle\big\}\;\mbox{ subject to }\;(x,y)\in\mathbb{R}^n\times S.
\end{eqnarray}
Applying now \cite[Proposition~5.3]{MordukhovichBook2006} gives us the necessary optimality condition in problem \eqref{nlp} written in the form
\begin{eqnarray*}
(u,0)\in\partial F(\bar{x},\bar{y})+N_{\mathbb{R}^n\times S}(\bar{x},\bar{y}).
\end{eqnarray*}
The decomposition property for the fully convex function $F$ yields the inclusions
\begin{eqnarray*}
u\in\partial_x F(\bar{x},\bar{y})\;\mbox{ and }\;0\in N_S(\bar{y}),
\end{eqnarray*}
which complete the proof, since the latter one is obvious.
\end{proof}

The last result of this section concerns again the general two-level value function \eqref{2level} and provides, besides its local Lipschitz property, a tighter upper estimate of the basic subdifferential \eqref{Basic Subdifferential} with the replacement of the inner semicompactness of the solution map $S_\circ$ in \eqref{S-o} by its inner semicontinuity. Furthermore, we replace the calmness of the set-valued mapping $K$ from \eqref{X-K-S} by the following generalized form of the Mangasarian-Fromovitz constraint qualification expressed via the generalized gradient \eqref{Clarke Subdifferential}:
\begin{equation}\label{CQ3}
\Big[0\in\sum^p_{i=1}\gamma_i\Bar{\partial}g_i(\bar{x},\bar{y}),\;\gamma_i\ge 0,\gamma_i g_i(\bar{x},\bar{y})=0,\;i=1,\ldots,p\Big]\Longrightarrow\gamma_i=0,\;i=1,\ldots,p.
\end{equation}

\begin{thm}[\bf subdifferential estimate and Lipschitz continuity of the two-level value function under inner semicontinuity]\label{inner semicontinuity}{\em Consider the bilevel program \eqref{optimistic 1}, where the functions $f,\,\vartheta_2:=g$, and $F$ are Lipschitz continuous near $(\bar{x},\bar{y})$, where $\Theta_2:=\mathbb{R}^n\times \mathbb{R}^m$ and $\Lambda_2:=\mathbb{R}^p_-$, and where the set-valued mapping $\Phi^S$ is calm at $(0, \bar{x},\bar{y})$. Suppose also that $S_{\circ}$ is inner semicontinuous at $(\bar{x},\bar{y})$, and that the constraint qualifications \eqref{coderivative criterion for S} and \eqref{CQ3} are satisfied at $(\bar{x},\bar{y})$. Then $\varphi_{\circ}$ is Lipschitz continuous around $\bar{x}$ and
\begin{equation*}
\begin{array}{l}
\partial\varphi_{\circ}(\bar{x})\subset\underset{r\ge 0}\bigcup\Big\{x^*\Big|\;(x^*+rx^*_{\varphi},0)\in\partial F(\bar{x},\bar{y})+r\partial f(\bar{x},\;\bar{y})+\disp\sum^{p}_{i=1}\beta_i\partial g_i(\bar{x},\bar{y}),\\
(x^*_{\varphi},0)\in\Bar{\partial}f(\bar{x},\bar{y})+\disp\sum^{p}_{i=1}\gamma_i\Bar{\partial}g_i(\bar{x},\bar{y}),\, \beta\ge 0, \la\beta, g\ra(\bar{x},\bar{y})=0,\, \gamma\ge 0, \la\gamma,g\ra(\bar{x},\bar{y})=0\Big\}.
\end{array}
\end{equation*}}
\end{thm}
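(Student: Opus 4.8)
The plan is to follow the template established in the proof of Theorem~\ref{inner semicompactness}, replacing the inner semicompactness machinery by the sharper inner semicontinuity results and simplifying the constraint structure according to the standing assumptions $\Theta_2=\mathbb{R}^n\times\mathbb{R}^m$, $\Lambda_2=\mathbb{R}^p_-$, $\vartheta_2=g$. First I would pick $\bar{x}^*\in\partial\varphi_{\circ}(\bar{x})$ and invoke the inner-semicontinuity version of the marginal-function subdifferential rule, e.g.\ \cite[Theorem~7(i)]{MordukhovichNamYenSubgradients2009} or \cite[Theorem~5.2(i)]{MordukhovichNamVariational2005}, together with the Lipschitz continuity of $F$ near $(\bar{x},\bar{y})$, to obtain a single representation $\bar{x}^*\in x^*+D^*S(\bar{x},\bar{y})(y^*)$ with $(x^*,-y^*)\in\partial F(\bar{x},\bar{y})$ — crucially \emph{without} the union over $y\in S_{\circ}(\bar{x})$, since inner semicontinuity pins down the fixed point $\bar{y}$.

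Next I would estimate $D^*S(\bar{x},\bar{y})(y^*)$. Exactly as in Theorem~\ref{inner semicompactness}, writing $\gph S=\Phi^S(0)$ and using the calmness of $\Phi^S$ at $(0,\bar{x},\bar{y})$ via \cite[Theorem~4.1]{HenrionJouraniOutrataCalmness2002}, one gets $r\ge 0$ with $(x^*,-y^*)\in r\,\partial(f-\varphi)(\bar{x},\bar{y})+N_{\gph K}(\bar{x},\bar{y})$; here $N_{\gph K}(\bar{x},\bar{y})$ is handled by the generalized MFCQ \eqref{CQ3}, which guarantees both the local Lipschitz continuity of $\varphi$ around $\bar{x}$ (through the Lipschitz-like property of $K$) and the normal-cone estimate $N_{\gph K}(\bar{x},\bar{y})\subset\{(a,b)\,|\,(a,b)\in\sum_{i=1}^p\beta_i\partial g_i(\bar{x},\bar{y}),\,\beta\ge 0,\,\langle\beta,g\rangle(\bar{x},\bar{y})=0\}$ — this replaces the $u\in N_{\Lambda_2}$ and $\partial\langle u,\vartheta_2\rangle$ terms of the general case. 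Then \eqref{coderivative criterion for S} yields the Lipschitz-like property of $S$ and hence, by \cite[Theorem~5.2(i)]{MordukhovichNamVariational2005} again, the local Lipschitz continuity of $\varphi_{\circ}$ around $\bar{x}$.

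The remaining task is to extract problem data from the term $r\,\partial(f-\varphi)(\bar{x},\bar{y})$. Here, instead of the Carath\'eodory/convex-hull argument of Theorem~\ref{inner semicompactness}, I would use the \emph{difference-rule} philosophy: by the subdifferential difference estimate (or simply $\partial(f-\varphi)(\bar{x},\bar{y})\subset\partial f(\bar{x},\bar{y})+\partial(-\varphi)(\bar{x},\bar{y})$ and $\partial(-\varphi)(\bar{x},\bar{y})\subset\bar{\partial}(-\varphi)(\bar{x},\bar{y})=-\bar{\partial}\varphi(\bar{x},\bar{y})$ via \eqref{convex hull property 1}), together with the marginal-function estimate for $\varphi$ combined with \eqref{CQ3} — giving $\bar{\partial}\varphi(\bar{x})\subset\bar{\partial}f(\bar{x},\bar{y})+\sum_{i=1}^p\gamma_i\bar{\partial}g_i(\bar{x},\bar{y})$ with $\gamma\ge 0$, $\langle\gamma,g\rangle(\bar{x},\bar{y})=0$ — I introduce the auxiliary vector $x^*_{\varphi}$ carrying precisely these Clarke-subdifferential terms. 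Substituting and using the sum rule \cite[Theorem~2.33]{MordukhovichBook2006} then produces the two displayed inclusions, with $\partial F$, $r\partial f$, $\sum\beta_i\partial g_i$ in the first and the Clarke-subdifferential data $\bar{\partial}f$, $\sum\gamma_i\bar{\partial}g_i$ in the second.

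I expect the main obstacle to be the bookkeeping at the difference step: one must be careful that the ``$x$-only'' component in the final estimate is genuinely the first coordinate of the full inclusion (the $0$ in the second slot comes from the marginal-function structure, where differentiation in $y$ contributes nothing to $\partial\varphi$ as a function of $x$), and that the appearance of the \emph{convexified} subdifferentials $\bar{\partial}f,\bar{\partial}g_i$ is unavoidable precisely because $\varphi$ is only Lipschitz (not convex) and one is forced through $\partial(-\varphi)\subset-\bar{\partial}\varphi$. Verifying that \eqref{CQ3} is strong enough to yield both the calmness/Lipschitz-like conclusions for $K$ and the Clarke-subdifferential marginal estimate for $\varphi$ — rather than only the weaker basic-normal-cone version — is the delicate point, and is where the interplay between \eqref{CQ3} and \eqref{coderivative criterion for S} has to be used carefully.
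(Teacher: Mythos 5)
Your proposal matches the paper's own argument in all essentials: the inner‑semicontinuity version of the marginal‑function rule (\cite[Theorem~7(i)]{MordukhovichNamYenSubgradients2009}) to avoid the union over $y\in S_{\circ}(\bar{x})$, the calmness of $\Phi^S$ via \cite[Theorem~4.1]{HenrionJouraniOutrataCalmness2002} to estimate $D^*S(\bar{x},\bar{y})(y^*)$, the sum rule plus the symmetry property \eqref{convex hull property 1} to pass from $\partial(f-\varphi)$ to $-\bar{\partial}\varphi$, and the Clarke‑subdifferential marginal estimate for $\varphi$ under \eqref{CQ3} to produce the auxiliary vector $x^*_{\varphi}$, with \eqref{coderivative criterion for S} reserved for the Lipschitz continuity of $\varphi_{\circ}$. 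The only cosmetic discrepancy is a sign convention in the first step (the paper takes $(x^*,y^*)\in\partial F(\bar{x},\bar{y})$ with $\bar{x}^*\in x^*+D^*S(\bar{x},\bar{y})(y^*)$), which does not affect the argument.
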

\begin{proof}
By $S_{\circ}(x)\subset S(x)$ for all $x\in X$ we have that $S$ is also inner semicontinuous at $(\bar{x},\bar{y})$. In addition to constraint qualification \eqref{CQ3}, it follows from \cite[Theorem~5.9]{MordukhovichNamPhanVarAnalMargBlP} that for any $u\in\Bar{\partial}\varphi(\bar{x})$ there is a vector $\gamma$ with $\gamma\ge 0$ (componentwise) and $\la\gamma,g(\bar{x},\bar{y})=0$ such that
\begin{eqnarray*}
(u,0)\in\Bar{\partial}f(\bar{x},\bar{y})+\sum^p_{i=1}\gamma_i\Bar{\partial}g_i(\bar{x},\bar{y}).
\end{eqnarray*}
As discussed above, the coderivative qualification condition \eqref{coderivative criterion for K} ensures that the lower-level value function $\varphi$ is Lipschitz continuous around $\bar{x}$. Then it follows from \cite[Theorem~7(i)]{MordukhovichNamYenSubgradients2009}, by the Lipschitz continuity of $F$ around $(\bar{x},\bar{y})$ and the inner semicontinuity of $S_{\circ}$ at the same point, that for any $\bar{x}^*\in\partial\varphi_{\circ}(\bar{x})$ there is a pair $(x^*,y^*)\in\partial F(\bar{x},\bar{y})$ with $\bar{x}^*\in x^*+D^*S(\bar{x},\bar{y})(y^*)$. Remembering the Lipschitz continuity of $\varphi$ around $\bar{x}$, we get as in the proof of Theorem~\ref{inner semicompactness} that for any $x^*\in D^*S(\bar{x},\bar{y})(y^*)$ there are $r\ge 0$, $\gamma$, and $\beta$ with
\begin{eqnarray*}
\gamma\ge 0,\;\la\gamma,g(\bar{x},\bar{y})\ra=0,\;\beta\ge 0,\;\mbox{ and }\;\la\beta,g(\bar{x},\bar{y})\ra=0
\end{eqnarray*}
satisfying the following inclusions, by taking into account the convex hull property \eqref{convex hull property 1}:
\begin{eqnarray*}
(x^*_{\varphi}, 0)\in\Bar{\partial}f(\bar{x},\bar{y})+\disp\sum^{p}_{i=1}\gamma_i\bar{\partial} g_i(\bar{x},y),
\end{eqnarray*}
\begin{eqnarray*}
(x^*+rx^*_{\varphi},-y^*)\in r\partial f(\bar{x},y)+\disp\sum^{p}_{i=1}\beta_i\partial g_i(\bar{x},y).
\end{eqnarray*}
Combining these facts, we arrive at the upper estimate of $\partial\varphi_{\circ}(\bar{x})$ claimed in the theorem. The Lipschitz continuity of the two-level value function $\varphi_{\circ}$ is obtained as in the proof of Theorem~\ref{inner semicompactness}; cf.\ also the proof of \cite[Theorem~5.2(i)]{MordukhovichNamVariational2005}.
\end{proof}

\begin{rem}[\bf discussions on sensitivity analysis of the two-level value function]\label{rem2level} {\rm The following observations are in order to make:

{\bf(i)} Assuming in the framework of Theorem~\ref{inner semicompactness} that $S_{\circ}(\bar{x})=S(\bar{x})=\{y\}$ and that the multiplier $u$ in the upper estimate  of $\partial \varphi(\bar{x})$ is unique, we see that all the upper bounds of $\partial\varphi_{\circ}(\bar{x})$ obtained in  Theorems~\ref{inner semicompactness}, \ref{convex case}, and \ref{inner semicontinuity} coincide provided that the functions involved are subdifferentially regular (i.e., $\partial\psi(\ox)=\Bar\partial\psi(\ox)$ in the setting under consideration) and the sets $\Theta_2$ and $\Lambda_2$ are adjusted accordingly.

{\bf(ii)} If all the functions involved in \eqref{2level} are continuously differentiable around the reference points with $\Theta_2:=\mathbb{R}^n\times\mathbb{R}^m$ and $\Lambda_2:=\mathbb{R}^p_-$, then the upper estimates that we get for $\partial\varphi_{\circ}$ in Theorems~\ref{inner semicompactness} and \ref{inner semicontinuity} reduce to those obtained in \cite[Theorem~5.9]{DempeMordukhovichZemkohoTwo-level}. This will induce the same observation concerning the necessary optimality conditions for the optimistic bilevel program ($P_o$) obtained in Theorem~\ref{optimality cond for Po}.

{\bf(iii)} Note that for the upper estimates of the basic subdifferential of $\varphi_{\circ}$ derived in Theorems~\ref{inner semicompactness}, \ref{convex case}, and \ref{inner semicontinuity},  the coderivative constraint qualification \eqref{coderivative criterion for S} is in fact not necessary. The latter condition comes into play only to obtain the Lipschitz continuity of the aforementioned two-level value function.}
\end{rem}

\section{Necessary Optimality Conditions for Optimistic Bilevel Programs}\label{Necessary optimality conditions}

Our focus in this section is on deriving necessary optimality conditions for the original optimistic formulation of the bilevel programming problem ($P_o$) by using the two-level value function \eqref{2level} and the sensitivity results obtained for it in Section~\ref{section 3}. In addition to the expression of the lower-level feasible set-valued mapping $K$ in \eqref{K(x)}, we consider now the representation of the upper-level feasible set given by
\begin{equation}\label{X}
X:=\Theta_1\cap\vartheta^{-1}_1(\Lambda_1),
\end{equation}
where the sets $\Theta_1\subset\mathbb{R}^n$ and $\Lambda_1\subset\mathbb{R}^k$ are always assumed to be closed, and where the mapping $\vartheta_1\colon\mathbb{R}^n\times\mathbb{R}^m\rightarrow\mathbb{R}^k$ is locally Lipschitz continuous. Similarly to the lower-level setting \eqref{X-K-S} the {\em calmness} assumption is imposed on the set-valued mapping
\begin{eqnarray}\label{PhiX}
\Phi^X(u):= \left\{x\in\Theta_1\big|\;\vartheta_1(x)+u\in\Lambda_1\right\}.
\end{eqnarray}

The following necessary optimality conditions for the optimistic bilevel program $(P_o)$ from \eqref{optimistic 1} are based on sensitivity analysis of the two-level value function $\ph_\circ$ and implement all the independent results for such an analysis obtained in Theorems~\ref{inner semicompactness}, \ref{convex case}, and \ref{inner semicontinuity} while combining them in the three statements of the next theorem.

\begin{thm}[\bf necessary optimality conditions for the original version in optimistic bilevel programming]\label{optimality cond for Po}{\em
Let $\bar{x}$ be a local optimal solution to (P$_o$), where $\vartheta_1$ is Lipschitz continuous around this point, and where the set-valued mapping $\Phi^X$ from \eqref{PhiX} is calm at $(0,\bar{x})$. Then the following assertions hold:
\begin{itemize}
\item[\bf(i)] Under the assumptions of Theorem~{\rm\ref{inner semicompactness}} there exist $y\in
S_{\circ}(\bar{x})$, $(\alpha,u,r)\in\mathbb{R}^{k+p+1}$, $(u_s,v_s)\in\mathbb{R}^{p+1}$, $y_s\in S(\bar{x})$, and $x^*_s\in\mathbb{R}^n$ as $s=1,\ldots,n+1$ such that
\begin{eqnarray}
\left(r\disp\sum^{n+1}_{s=1}v_s x^{*}_s,0\right)\in\partial F(\bar{x},y)+r\partial f(\bar{x},y)\qquad\qquad\qquad\qquad\label{op1}\\+\left[\partial\langle\alpha,\vartheta_1\rangle(\bar{x})+N_{\Theta_1}(\bar{x})\right]\times\left\{0\right\}
+\;\partial\langle u,\vartheta_2\rangle(\bar{x},y)+N_{\Theta_2}(\bar{x},y),\nonumber\\
\qquad \mbox{ for }\;s=1,\ldots, n+1,\;(x^{*}_s,0)\in\partial f(\bar{x},y_s)+\partial\langle u_s,\vartheta_2\rangle(\bar{x},y_s)+N_{\Theta_2}(\bar{x},y_s),\label{op2}\\
\mbox{for }\;s=1,\ldots,n+1,\;u_s\in N_{\Lambda_2}\big(\vartheta_2(\bar{x},y_s)\big),\;v_s\ge 0,\;\disp\sum^{n+1}_{s=1}v_s=1,\label{op3}\\
u\in N_{\Lambda_2}\big(\vartheta_2(\bar{x},y)\big),\label{op5}\\
r\ge 0,\;\alpha\in N_{\Lambda_1}\big(\vartheta_1(\bar{x})\big).\label{op6}
\end{eqnarray}
\item[\bf(ii)] Under the assumptions of Theorem~{\rm\ref{convex case}} there exist $y\in S_{\circ}(\bar{x})$, $\alpha$, $\beta$, $\gamma$ and $r$ such
that condition \eqref{op6} holds together with
\begin{eqnarray}
0\in\partial_x F(\bar{x},y)+r\big(\partial_x f(\bar{x},y)-\partial_x f(\bar{x},y)\big)+\disp\sum^p_{i=1}\beta_i\partial_x g_i(\bar{x},y)\qquad\label{conv 1}\\
-\;\;r\sum^p_{i=1}\gamma_i\partial_x g_i(\bar{x},y)+\partial\langle\alpha,\vartheta_1\rangle(\bar{x})+N_{\Theta_1}(\bar{x}),\nonumber\\
0\in\partial_y F(\bar{x},y)+r\partial_y f(\bar{x},y)+\disp\sum^p_{i=1}\beta_i\partial_y g_i(\bar{x},y),\label{conv 2}\\
0\in\partial_y f(\bar{x},y)+\disp\sum^p_{i=1}\gamma_i\partial_y g_i(\bar{x},y),\label{conv 3}\\
\beta\ge 0,\;\la\beta,g(\bar{x},{y})\ra=0,\label{conv 4} \\
\gamma\ge 0,\;\la\gamma,g(\bar{x},y)\ra=0.\label{conv 5}
\end{eqnarray}

\item[\bf(iii)] Under the assumptions of Theorem~{\rm\ref{inner semicontinuity}} there exist $x^*$, $\alpha$, $\beta$, $\gamma$, and $r$ such that we
have conditions \eqref{op6} and \eqref{conv 4}--\eqref{conv 5} (where $y:=\bar{y}$) together with
\begin{eqnarray}
\begin{split}(rx^*,0)\in \partial F(\bar{x},\bar{y})+&r\partial f(\bar{x},\bar{y})+\disp\sum^{p}_{i=1}\beta_i\partial g_i(\bar{x},\bar{y})\\+&\left[\partial\langle\alpha,\vartheta_1\rangle(\bar{x})+N_{\Theta_1}(\bar{x})\right]\times\left\{0\right\},
\end{split}\label{iscn 1}\\
(x^*,0)\in\Bar{\partial}f(\bar{x},\bar{y})+\disp\sum^{p}_{i=1}\gamma_i\Bar{\partial}g_i(\bar{x},\bar{y}).\label{iscn 2}
\end{eqnarray}
\end{itemize}}
\end{thm}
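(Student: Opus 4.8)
The plan is to reduce the optimistic bilevel problem $(P_o)$ to a single-level nonsmooth optimization problem to which the generalized Lagrange multiplier rule can be applied, using the two-level value function $\ph_\circ$ as the objective. Since $\bar x$ is a local optimal solution to $(P_o)$ from \eqref{optimistic 1}, it is a local minimizer of $\ph_\circ$ over the set $X=\Theta_1\cap\vartheta_1^{-1}(\Lambda_1)$. The crucial point is that under the hypotheses of any of Theorems~\ref{inner semicompactness}, \ref{convex case}, or \ref{inner semicontinuity}, the two-level value function $\ph_\circ$ is \emph{locally Lipschitz continuous} around $\bar x$; this legitimizes the use of the basic subdifferential calculus for the composite minimization problem
\begin{equation*}
\min~\ph_\circ(x)\quad\text{subject to}\quad x\in\Theta_1,\ \vartheta_1(x)\in\Lambda_1.
\end{equation*}

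First I would apply the generalized Fermat rule together with the necessary optimality conditions for constrained problems (e.g., \cite[Proposition~5.3]{MordukhovichBook2006} combined with \cite[Theorem~3.38]{MordukhovichBook2006} or the corresponding constrained-minimization result in \cite{MordukhovichBook2006}): there is a multiplier producing
\begin{equation*}
0\in\partial\ph_\circ(\bar x)+N_X(\bar x).
\end{equation*}
Then, using the calmness of $\Phi^X$ at $(0,\bar x)$ and the intersection/pre-image normal cone rule (as in \cite[Theorem~4.1]{HenrionJouraniOutrataCalmness2002} and \cite[Theorems~3.4 and 3.8]{MordukhovichBook2006}), I would write
\begin{equation*}
N_X(\bar x)\subset\partial\langle\alpha,\vartheta_1\rangle(\bar x)+N_{\Theta_1}(\bar x)\quad\text{with some }\alpha\in N_{\Lambda_1}\big(\vartheta_1(\bar x)\big),
\end{equation*}
which accounts for condition \eqref{op6} and the upper-level terms appearing in \eqref{op1}, \eqref{conv 1}, and \eqref{iscn 1}. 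The embedding into $\mathbb{R}^n\times\{0\}$ in \eqref{op1} and \eqref{iscn 1} simply reflects that the upper-level constraint involves only the $x$ variable.

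The remaining step is to substitute the appropriate upper estimate of $\partial\ph_\circ(\bar x)$ into $0\in\partial\ph_\circ(\bar x)+N_X(\bar x)$: for part~(i) the estimate from Theorem~\ref{inner semicompactness}, for part~(ii) the one from Theorem~\ref{convex case}, and for part~(iii) the one from Theorem~\ref{inner semicontinuity}. Each substitution is essentially a bookkeeping exercise matching the $y\in S_\circ(\bar x)$, $y_s\in S(\bar x)$, the multipliers $u$, $u_s$, $\gamma$, $\beta$, $r$, and the convex weights $v_s$ with $\sum_{s}v_s=1$ exactly as listed in \eqref{op2}--\eqref{op5}, \eqref{conv 2}--\eqref{conv 5}, and \eqref{iscn 2}. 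In the fully convex case of part~(ii) one also invokes the $x$/$y$ decomposition $\partial\psi(x,y)\subset\partial_x\psi(x,y)\times\partial_y\psi(x,y)$, as in the proof of Theorem~\ref{convex case}, to split \eqref{conv 2} and \eqref{conv 3} from the $x$-component condition \eqref{conv 1}.

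I expect the only genuine obstacle to be conceptual rather than technical: ensuring that the Lipschitz continuity of $\ph_\circ$ (hence the applicability of the subdifferential sum rule and the Fermat-type stationarity) is indeed guaranteed under each of the three distinct hypothesis packages, since the constraint qualifications \eqref{coderivative criterion for K}, \eqref{coderivative criterion for S}, \eqref{CQ3} and the calmness of $\Phi^K$, $\Phi^S$ enter differently in Theorems~\ref{inner semicompactness}, \ref{convex case}, and \ref{inner semicontinuity}. Once that Lipschitz property is in hand in each case, the derivation of \eqref{op1}--\eqref{iscn 2} is a routine combination of the generalized Lagrange multiplier rule with the already-established subdifferential estimates, and nothing new needs to be proved.
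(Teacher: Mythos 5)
Your proposal follows essentially the same route as the paper's own proof: it invokes the Lipschitz continuity of $\varphi_{\circ}$ established in Theorems~\ref{inner semicompactness}, \ref{convex case}, and \ref{inner semicontinuity}, derives the stationarity condition $0\in\partial\varphi_{\circ}(\bar{x})+N_X(\bar{x})$ from \cite[Proposition~5.3]{MordukhovichBook2006}, resolves $N_X(\bar{x})$ via the calmness of $\Phi^X$ and \cite[Theorem~4.1]{HenrionJouraniOutrataCalmness2002}, and then substitutes the three subdifferential upper estimates. The one concern you raise --- whether the Lipschitz property really holds under each of the three hypothesis packages --- is already settled by those theorems, so the argument is complete and correct.
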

\begin{proof}
Under the assumptions of either (i), or (ii), or (iii) the two-level value function $\varphi_{\circ}$ is Lipschitz continuous near $\bar{x}$ as proved in the aforementioned theorems. Then remembering the closedness of the set $X=\Theta_1\cap\vartheta^{-1}(\Lambda_1)$, we get by applying the necessary optimality condition from \cite[Proposition~5.3]{MordukhovichBook2006} to problem $(P_o)$ due to its structure in \eqref{optimistic 1} with the feasible set \eqref{X} that
\begin{eqnarray}\label{nc-opt}
0\in\partial\varphi_{\circ}(\bar{x})+N_X(\bar{x}).
\end{eqnarray}
The imposed calmness constraint qualification on the set-valued mapping $\Phi^X$ at $(0,\bar{x})$ allows us to deduce from \cite[Theorem~4.1]{HenrionJouraniOutrataCalmness2002} that for any $x^*\in N_{X}(\bar{x})$ there is a vector $\alpha\in N_{\Lambda_1}(\vartheta_1(\bar{x}))$ satisfying the inclusion
\begin{eqnarray*}
x^*\in\partial\langle\alpha\vartheta_1\rangle(\bar{x})+ N_{\Theta_1}(\bar{x}).
\end{eqnarray*}
Combining the latter with \eqref{nc-opt} and the upper estimate for $\partial\varphi_{\circ}(\bar{x})$ from Theorems~\ref{inner semicompactness}, \ref{convex case}, and  \ref{inner semicontinuity}, we arrive at the conclusions in assertions (i)--(iii) of the theorem.
\end{proof}

\begin{rem}[\bf discussion on necessary optimality conditions in optimistic bilevel programming]\label{dis-opt} {\rm The following remarks discuss some important features of the obtained necessary optimality conditions for the original optimistic model $(P_o)$ and their comparison with known results for optimistic bilevel programs.

{\bf(i)} As follows from the proof of Theorem~\ref{optimality cond for Po}, the Lipschitz continuity of the two-level value function $\ph_\circ$ is used to obtain the intermediate necessary optimality condition in form \eqref{nc-opt}. It is worth mentioning that the necessary optimality condition \eqref{nc-opt} holds under the more general constraint qualification
\begin{eqnarray}\label{singular-opt}
\partial^\infty\ph_\circ(\ox)\cap\big(-N_X(\ox)\big)=\{0\},
\end{eqnarray}
where $\partial^\infty\psi(\ox)$ is the {\em singular subdifferential} of $\psi\colon\mathbb{R}^n\to(-\infty,\infty]$ at $\ox\in\dom\psi$ given by
\begin{eqnarray}\label{ss}
\partial^\infty\psi(\ox):=\left\{\xi\in\mathbb{R}^n|\;(\xi,0)\in N_{\text{epi}\psi}\big(\bar x,\psi(\bar x)\big)\right\}.
\end{eqnarray}
It is well known in variational analysis \cite{MordukhovichBook2006,RockafellarWetsBook1998} that the singular subdifferential \eqref{ss} of an extended-real-valued l.s.c.\ function in finite dimensions completely characterized its locally Lipschitzian property around the point in question via $\partial^\infty\psi(\ox)=\{0\}$. On the other hand, the local Lipschitz continuity of $\ph_\circ$ is by far not necessary for the validity of the qualification requirement \eqref{singular-opt}, while the upper subdifferential estimates obtained in Theorems~\ref{inner semicompactness}, \ref{convex case}, and \ref{inner semicontinuity} do not use the Lipschitz continuity of the two-level value function \eqref{2level}. In particular, under the validity of \eqref{singular-opt} the coderivative constraint qualification \eqref{coderivative criterion for S} needed just for the Lipschitz continuity of $\ph_\circ$ can be dropped. It happens, e.g., when $X:=\mathbb{R}^n$ in the upper-level problem.

{\bf(ii)} To compare Theorem~\ref{optimality cond for Po} with previous work on the subject, consider the original optimistic bilevel optimization problem (P$_{\circ}$) in the particular case where $\Theta_1:=\mathbb{R}^n$, $\Lambda_1:=\mathbb{R}^k_-$, $\Theta_2:=\mathbb{R}^n\times\mathbb{R}^m$, and $\Lambda_2:=\mathbb{R}^p_-$. Then the necessary optimality conditions obtained in Theorem~\ref{optimality cond for Po}(i,ii,iii) reduce to those derived in \cite[Theorem~5.1]{DempeDuttaMordukhovichNewNece}, \cite[Theorem~4.1]{DempeDuttaMordukhovichNewNece}, and \cite[Theorem~6.4]{MordukhovichNamPhanVarAnalMargBlP}, respectively, for the now conventional form \eqref{Standard Bilevel0la} in optimistic bilevel programming with the additional statement on  $y\in S_\circ(\ox)$. In the framework of smooth data in optimistic bilevel programs, the results somewhat similar to (while different from) Theorem~\ref{optimality cond for Po}(i) can also be found in \cite{DempeZemkohoGenMFCQ,YeZhuOptCondForBilevel1995}. Observe that the aforementioned papers employ the so-called {\em partial calmness} constraint qualification introduced in Ye and Zhu \cite{YeZhuOptCondForBilevel1995}, which is replaced here by the calmness of the set-valued mapping $\Phi^S$. In fact, these constraint qualifications are largely related; cf.\ the proof of \cite[Theorem~4.10]{DempeZemkohoGenMFCQ}. Note also that the (Mangasarian-Fromovitz type) upper-level and lower-level regularity conditions used in \cite{DempeDuttaMordukhovichNewNece,MordukhovichNamPhanVarAnalMargBlP} yield (being significantly stronger) the fulfilment of the calmness conditions for the set-valued mappings $\Phi^X$ and $\Phi^K$ imposed in Theorem~\ref{optimality cond for Po}. Furthermore, the coderivative constraint qualification \eqref{coderivative criterion for K} readily follows from the lower-level regularity. The reader is referred to \cite{DempeDuttaMordukhovichNewNece,DempeZemkohoGenMFCQ,MordukhovichNamPhanVarAnalMargBlP,YeZhuOptCondForBilevel1995} and the bibliographies therein for the definitions and more detailed discussions on the partial calmness, upper-level and lower-level regularity, and related constraint qualifications. We particularly emphasize remarkable developments by Henrion and Surowiec \cite{hen-sur11} who strongly investigated relationships between calmness, partial calmness as well as novel constraint qualifications introduced and implemented by them to derive new necessary optimality conditions for the conventional optimistic bilevel model with a convex lower-level problem.}
\end{rem}

\section{Sensitivity and Optimality Conditions for Pessimistic Bilevel Programs}\label{pessimistic}

This section is devoted to the study of the pessimistic bilevel program $(P_p)$ formulated in \eqref{pesimi 1} with its initial data $X$, $S$, $\varphi$, and $K$ defined as in \eqref{X}, \eqref{Bilevel0la}, \eqref{OptimalValueFunction}, and \eqref{K(x)}, respectively. Developing the two-level value function approach, we obtain here the results on sensitivity analysis and necessary optimality conditions for $(P_p)$ similar to those established for the original optimistic model $(P_o)$ in Sections~\ref{section 3} and \ref{Necessary optimality conditions}. In fact, a strong advantage of our two-level value function approach to bilevel programming is the possibility to simultaneously apply it to sensitivity and optimality issues in both optimistic and pessimistic versions. In this section we demonstrate it for the case of the pessimistic model \eqref{pesimi 1} by involving the maximizing two-level value function $\ph_p$ from \eqref{2level-pes} instead of $\ph_\circ$ from \eqref{2level} used above. Taking into account the relationship
\begin{equation}\label{optimistic two-level value function}
\ph_p(x)=-\ph^o_p(x)\;\mbox{ with }\;\varphi^o_p(x):=\underset{y}\min\big\{-F(x,y)\big|\;y\in S(x)\big\}
\end{equation}
and that the local Lipschitzian property is invariant with respect to the negative sign, we are able to deduce sensitivity results for the pessimistic program from those obtained in Theorems~\ref{inner semicompactness}, \ref{convex case}, and \ref{inner semicontinuity}. To proceed in this way, define the solution set
\begin{equation}\label{S^p-o}
S^p_o(x):=\left\{y\in S(x)\big|\;F(x,y)+\varphi^o_p(x)\ge 0\right\}.
\end{equation}

\begin{thm}[\bf sensitivity analysis for pessimistic bilevel programs]\label{max 2 level value function} {\em Considering the maximizing two-level value function $\varphi_p$ in \eqref{2level-pes}, the following assertions hold:
\begin{itemize}
\item[\bf(i)] Let the assumptions of Theorem~{\rm\ref{inner semicompactness}} be satisfied for the function $\varphi^o_p$ from
\eqref{optimistic two-level value function}
with the set $S_{\circ}$ \eqref{S-o} replaced by $S^p_o$ \eqref{S^p-o}. Then $\varphi_p$ is Lipschitz continuous around $\bar{x}$ and we have the subdifferential upper estimate
\begin{eqnarray*}
\partial\varphi_p(\bar{x})&\subset&\left\{-\disp\sum^{n+1}_{t=1}\eta_t x^*_t\Big|\;\disp\sum^{n+1}_{s=1}v_s=1,\;\disp\sum^{n+1}_{t=1}\eta_t=1,\right.\\
&&v_s\ge 0,\;u_s\in N_{\Lambda_2}\big(\vartheta_2(\bar{x},y_s)\big),\;y_s\in S(\bar{x}),\;s=1,\ldots,n+1,\\
&&(x^*_s,0)\in\partial f(\bar{x}, y_s)+\partial\langle u_s,\vartheta_2\rangle(\bar{x},y_s)+N_{\Theta_2}(\bar{x},y_s),\;s=1,\ldots,n+1,\\
&&\eta_t\ge 0,\;r_t\ge 0,\;u_t\in N_{\Lambda_2}\big(\vartheta_2(\bar{x},y_t)\big),\;y_t\in S^p_o(\bar{x}),\;t=1,\ldots,n+1,\\
&&(x^*_t+r_t\disp\sum^{n+1}_{s=1}v_s x^*_s, 0)\in \partial (-F)(\bar{x}, y_t)+ r_t \partial f(\bar{x}, y_t)\\
&&\qquad\qquad\qquad+\partial\langle u_t,\vartheta_2\rangle(\bar{x},y_t)+N_{\Theta_2}(\bar{x},y_t),\;t=1,\ldots,n+1\Big\}.
\end{eqnarray*}

\item[\bf(ii)] Let the assumptions of Theorem~{\rm\ref{convex case}} be satisfied for $\varphi^o_p$ with $S_{\circ}$ replaced by $S^p_o$. Then $\varphi_p$
is Lipschitz continuous around $\bar{x}$ and we have the following inclusion, where $\Lambda(\bar{x},y_t)$ and $\Lambda^o(\bar{x},y_t)$ are given in \eqref{Lambda (x,y)} and \eqref{Lambda1}, respectively:
\begin{eqnarray*}
\begin{array}{ll}
\partial \varphi_p(\bar{x})\subset\Big\{-\disp\sum^{n+1}_{t=1}\eta_t x^*_t\Big|\;\disp\sum^{n+1}_{t=1}\eta_t=1,\\
\qquad\qquad\qquad\eta_t\ge 0,\;(r_t,\beta^t)\in\Lambda^o(\bar{x},y_t),\;\gamma\in\Lambda(\bar{x},y_t).\;y_t\in S^p_o(\bar{x}),\;t=1,\ldots,n+1,\\
\qquad\qquad\qquad x^*_t\in\partial_x(-F)(\bar{x},y_t)+r_t\big(\partial_x f(\bar{x}, y_t)-\partial_x f(\bar{x},y_t)\big)\\
\qquad\qquad\qquad\qquad\qquad +\disp\sum^p_{i=1}\beta^t_i\partial_x g_i(\bar{x},y_t)-r_t\disp\sum^p_{i=1}\gamma_i\partial_x g_i(\bar{x},y_t),\;t=1,\ldots,n+1\Big\}.
\end{array}
\end{eqnarray*}

\item[\bf(iii)] Let the assumptions of Theorem~{\rm\ref{inner semicontinuity}} be satisfied for $\varphi^o_p$ with
$S_{\circ}$ replaced by $S^p_o$. Then $\varphi_p$ is Lipschitz continuous near $\bar{x}$ and we have the inclusion
\begin{eqnarray*}
\begin{array}{ll}
\partial\varphi_p(\bar{x})\subset\Big\{-\disp\sum^{n+1}_{t=1}\eta_t x^*_t\Big|\;\disp\sum^{n+1}_{t=1}\eta_t=1,\\
\qquad\qquad\quad\eta_t\ge 0,\;r_t\ge 0,\;\beta^t\ge 0,\;\la\beta^t),g(\bar{x},\bar{y}\ra=0,\;t=1,\ldots,n+1,\\
\qquad\qquad\quad(x^*_\varphi,0) \in\Bar{\partial}f(\bar{x},\bar{y})+\disp\sum^p_{i=1}\gamma_i\Bar{\partial}g_i(\bar{x},\bar{y}),\;\gamma\ge 0,\;\la\gamma,g(\bar{x},\bar{y}\ra=0,\\
\qquad\qquad\quad(x^*_t+r_tx^*_\varphi,0)\in\partial(-F)(\bar{x},\bar{y})+r_t\partial f(\bar{x},\bar{y})+\disp\sum^p_{i=1}\beta^t_i\partial g_i(\bar{x},\bar{y}),\;t=1,\ldots,n+1\Big\}.
\end{array}
\end{eqnarray*}
\end{itemize}}
\end{thm}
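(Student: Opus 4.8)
The plan is to exploit the relationship \eqref{optimistic two-level value function}, namely $\ph_p(x)=-\ph^o_p(x)$ where $\ph^o_p$ is itself a two-level value function of exactly the form \eqref{2level} with the upper-level objective $F$ replaced by $-F$. Since the local Lipschitz property is invariant under the sign change, the Lipschitz continuity of $\ph_p$ around $\ox$ in all three parts follows immediately from the corresponding conclusion of Theorem~\ref{inner semicompactness}, Theorem~\ref{convex case}, and Theorem~\ref{inner semicontinuity}, applied to $\ph^o_p$ with $-F$ in place of $F$ and $S^p_o$ in place of $S_\circ$; one only needs to observe that the solution map associated with $\ph^o_p$, namely $\arg\min\{-F(x,y)\mid y\in S(x)\}$, coincides with $S^p_o$ from \eqref{S^p-o}, and that all the hypotheses (calmness of $\Phi^K$, $\Phi^S$, inner semicompactness/semicontinuity, the constraint qualifications \eqref{coderivative criterion for K}, \eqref{coderivative criterion for S}, \eqref{CQ3}) are assumed to hold precisely in this transcribed form.

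For the subdifferential estimates, the key step is the identity $\partial\ph_p(\ox)=\partial(-\ph^o_p)(\ox)$. Because $\ph^o_p$ is locally Lipschitz around $\ox$ (just established), I would invoke the inclusion $\partial(-\ph^o_p)(\ox)\subset\overline\partial(-\ph^o_p)(\ox)=-\overline\partial\ph^o_p(\ox)=-\co\,\partial\ph^o_p(\ox)$, using the convex hull symmetry property \eqref{convex hull property 1} together with the definition \eqref{Clarke Subdifferential}. Then I substitute the upper estimate for $\partial\ph^o_p(\ox)$ coming from the relevant theorem (with $F\rightsquigarrow -F$, $S_\circ\rightsquigarrow S^p_o$), take its convex hull, and apply Carath\'eodory's theorem in $\mathbb{R}^n$ to write an arbitrary element of $\co\,\partial\ph^o_p(\ox)$ as a convex combination $\sum_{t=1}^{n+1}\eta_t x^*_t$ of at most $n+1$ points, each $x^*_t$ belonging to the estimate from the appropriate theorem evaluated at some $y_t\in S^p_o(\ox)$ with its own multipliers $r_t$, $u_t$ (resp.\ $(r_t,\beta^t),\gamma$, resp.\ $r_t,\beta^t,\gamma$). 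Negating gives $-\sum_{t=1}^{n+1}\eta_t x^*_t$, which is exactly the outer form displayed in parts (i)--(iii); the inner auxiliary system (the $v_s$, $u_s$, $y_s$, $x^*_s$ block describing $\partial(-\varphi)(\ox)$ via a second Carath\'eodory expansion) is inherited verbatim from Theorem~\ref{inner semicompactness}, since that block does not involve $F$ at all and is therefore unaffected by the replacement $F\rightsquigarrow -F$.

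Part (ii) is the simplest: under full convexity the symmetry \eqref{convex hull property 1} reads $\partial(-\ph^o_p)(\ox)\subset-\partial\ph^o_p(\ox)$ with no closure or extra convex hull needed beyond the single Carath\'eodory step on $\partial\ph^o_p$ already present in Theorem~\ref{convex case}'s estimate, so the only change from Theorem~\ref{convex case} is the global minus sign and the swap $\partial_x F\rightsquigarrow\partial_x(-F)$, while $\Lambda(\ox,y_t)$ is unchanged (it involves only $f$ and $g$) and $\Lambda^o(\ox,y_t)$ is the one built from $-F$. Part (iii) similarly transcribes Theorem~\ref{inner semicontinuity}, carrying the regular subdifferential block for $\varphi$ unchanged and replacing $\partial F$ by $\partial(-F)$ in the inclusion for $x^*_t+r_tx^*_\varphi$. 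The main obstacle — really the only place requiring care rather than bookkeeping — is keeping the two independent Carath\'eodory expansions straight in part (i): one expansion arises inside the estimate of Theorem~\ref{inner semicompactness} for $\partial(-\varphi)$ (the $s$-indexed data), and a second, outer one arises here from convexifying $\partial\ph^o_p$ itself (the $t$-indexed data with weights $\eta_t$); I would be explicit that these are applied in sequence and that the $n+1$ bound for each is what yields the stated index ranges. Everything else is a direct application of the cited theorems to the auxiliary function $\ph^o_p$.
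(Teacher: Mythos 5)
Your proposal matches the paper's own argument: both reduce to the auxiliary optimistic function $\varphi^o_p$ via \eqref{optimistic two-level value function}, obtain $\partial\varphi_p(\bar x)\subset-\co\partial\varphi^o_p(\bar x)$ from the convex hull symmetry property \eqref{convex hull property 1}, and then apply Carath\'eodory's theorem together with the subdifferential estimates of Theorems~\ref{inner semicompactness}, \ref{convex case}, and \ref{inner semicontinuity} applied to $\varphi^o_p$ (with $-F$ in place of $F$ and $S^p_o$ in place of $S_\circ$). No gaps; in particular your explicit handling of the two nested Carath\'eodory expansions in part (i) --- the $s$-indexed block for $\partial(-\varphi)(\bar x)$ and the outer $t$-indexed convexification of $\partial\varphi^o_p(\bar x)$ --- is exactly how the paper proceeds.
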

\begin{proof}
Due to \eqref{optimistic two-level value function}, the Lipschitz continuity of $\varphi_p$ follows from Theorems~\ref{inner semicompactness}, \ref{inner semicontinuity}, and \ref{convex case}, respectively. To verify the upper estimate of $\partial\varphi_p$ in (i), observe from the proof of Theorem~\ref{inner semicompactness} that for any $x^*\in\partial \varphi^o_p(\bar{x})$ there exist a number $r\ge 0$ and vectors $y\in S^p_o(\bar{x})$, $u\in N_{\Lambda_2}(\vartheta_2(\bar{x},y))$ such that
$(x^*,0)=(x^{*1}, y^{*1})+(x^{*2}, y^{*2})+(x^{*3},y^{*3})$ with the couples
\begin{eqnarray*}
\begin{array}{ll}
&(x^{*1}, y^{*1})\in(-F)(\bar{x},y)+r\partial f(\bar{x},y),\;(x^{*2},y^{*2})\in r\partial(-\varphi)(\bar{x})\times\{0\},\\
&\mbox{and }\;(x^{*3},y^{*3})\in\partial\langle u,\vartheta_2\rangle(\bar{x},y)+N_{\Theta_2}(\bar{x},y).
\end{array}
\end{eqnarray*}
Furthermore, it follows from \eqref{convex hull property 1} that $\partial\varphi_p(\bar{x})\subset-\co\partial\varphi^o_p(\bar{x})$, and that for any $x^*\in\co\partial\varphi^o_p(\bar{x})$ there exist $x^*_t$ as $t=1,\ldots,n+1$ satisfying $x^*=\sum^{n+1}_{t=1}\eta_t x^*_t$, where
\begin{eqnarray*}
\begin{array}{ll}
&\disp\sum^{n+1}_{t=1}\eta_t=1,\;y_t\in S^p_o(\bar{x}),\;\eta_t\ge 0,\;r_t\ge 0,\\
&u_t\in N_{\Lambda_2}\big(\vartheta_2(\bar{x},y_t)\big),\;(x^*_t,0)=(x^{*1}_t,y^{*1}_t)+(x^{*2}_t,y^{*2}_t)+(x^{*3}_t,y^{*3}_t),\\
&(x^{*1}_t,y^{*1}_t)\in(-F)(\bar{x},y_t)+r\partial f(\bar{x},y_t),\;(x^{*2}_t,y^{*2}_t)\in r\partial(-\varphi)(\bar{x})\times\{0\},\\
&\mbox{and }\;(x^{*3}_t,y^{*3}_t)\in\partial\langle u_t,\vartheta_2\rangle(\bar{x},y_t)+N_{\Theta_2}(\bar{x},y_t)\;\mbox{ as }\;t=1,\ldots,n+1.
\end{array}
\end{eqnarray*}
This yields the existence of $x^*_\varphi\in\co\partial\varphi(\bar{x})$ with$ (x^*_t+r_tx^*_\varphi,0)=(x^{*1}_t,y^{*1}_t)+(x^{*3}_t,y^{*3}_t)$,
\begin{eqnarray*}
(x^{*1}_t,y^{*1}_t)\in(-F)(\bar{x},y_t)+r\partial f(\bar{x},y_t),\mbox{ and }
(x^{*3}_t,y^{*3}_t)\in\partial\langle u_t,\vartheta_2\rangle(\bar{x},y_t)+N_{\Theta_2}(\bar{x},y_t).
\end{eqnarray*}
Invoking now the upper estimate of $\partial\varphi(\bar{x})$ from the proof of Theorem~\ref{inner semicompactness} verifies assertion (i). The proofs of assertions (ii) and (iii) are similar with the usage of the results obtained in Theorems~\ref{convex case} and \ref{inner semicontinuity}, respectively.
\end{proof}

Proceeding as in the case of the original optimistic bilevel program \eqref{optimistic 1}, we derive now necessary optimality conditions for nonsmooth pessimistic bilevel programs.

\begin{thm}[\bf necessary optimality conditions in nonsmooth pessimistic programming]\label{optimality cond for Pp} {\em Let $\bar{x}$ be a local optimal solution to problem ($P_p$) with the constraints defined at the beginning of this section, where the function $\vartheta_1$ is locally Lipschitz continuous around $\bar{x}$, and where the sets $\Theta_1$ and $\Lambda_1$ are closed. If the set-valued mapping $\Phi^X$ from \eqref{PhiX} is calm at $(0,\bar{x})$, then the following assertions hold:
\begin{itemize}
\item[\bf(i)] Under the assumptions of Theorem~{\rm\ref{max 2 level value function}}(i) there exist $y_t\in S^p_o(\bar{x})$, $\eta_t$, $r_t$, $u_t$ with $t=1,
\ldots, n+1$, $y_s\in S(\bar{x})$, $v_s$, and $u_s$ with $s=1,\ldots,n+1$ such that conditions \eqref{op2}--\eqref{op3} and \eqref{op6} $($where $r:=r_t)$ are satisfied together with
\begin{eqnarray}
\disp\sum^{n+1}_{t=1}\eta_t x^*_t\in\partial\langle\alpha,\vartheta_1\rangle(\bar{x})+N_{\Theta_1}(\bar{x})\;\mbox{ for all }\;t=1,\ldots,n+1,\label{pes 1}\\
\begin{split}\Big(x^*_t+r_t\disp\sum^{n+1}_{s=1}v_s x^*_s,0\Big)\in\partial(-F)(\bar{x},y_t)+&r_t\partial f(\bar{x},y_t)+\\\partial\langle u_t,\vartheta_2\rangle(\bar{x},y_t)+&N_{\Theta_2}(\bar{x}, y_t),\end{split}\label{pes 2}\\
\mbox{ for }\;t=1,\ldots,n+1,\;\eta_t\ge 0,\;\disp\sum^{n+1}_{t=1}\eta_t=1,\label{pes 3}\\
\mbox{ for }\;t=1,\ldots,n+1,\;u_t\in N_{\Lambda_2}\big(\vartheta_2(\bar{x},y_t)\big).\label{pes 4}
\end{eqnarray}

\item[\bf(ii)] Under the assumptions of Theorem~{\rm\ref{max 2 level value function}}(ii) for any $y\in S(\bar{x})$ there exist $\gamma$,
$y_t\in S^p_o(\bar{x}),\;\eta_t,\;r_t,$ and $\beta^t$ with $t=1,\ldots,n+1$ such that the relationships in \eqref{op6} $($with $r:=r_t)$, \eqref{conv 3} $($with $y:=y_t)$, \eqref{conv 5} $($with $y:= y_t)$,  \eqref{pes 1}, and \eqref{pes 3} hold together with the following conditions satisfying for all $t=1,\ldots,n+1$:
\begin{eqnarray}
x^*_t\in\partial_x(-F)(\bar{x},y_t)+r_t\big(\partial_x f(\bar{x},y_t)-\partial_x f(\bar{x},y)\big)\qquad\qquad\qquad\qquad\nonumber\\
+\disp\sum^p_{i=1}\beta^t_i\partial_x g_i(\bar{x},y_t)-r_t\disp\sum^p_{i=1}\gamma_i\partial_x g_i(\bar{x},y),\\
0\in\partial_y F(\bar{x},y_t)+r_t\partial_y f(\bar{x},y_t)+\disp\sum^p_{i=1}\beta^t_i\partial_y g_i(\bar{x},y_t),\\
\beta^t\ge 0,\;\la\beta^t,g(\bar{x},y_t)\ra=0.\label{pess 6}
\end{eqnarray}

\item[\bf(iii)] Under the assumptions of Theorem~{\rm\ref{max 2 level value function}}(iii) there exist $\eta_t,\;r_t,\;u_t$ as $t=1,\ldots,n+1$
such that the relationships in \eqref{op6} $($with $r:=r_t)$, \eqref{conv 5} $($with $y:=\bar{y})$, \eqref{iscn 2}, \eqref{pes 1}, \eqref{pes 3}, and \eqref{pess 6} are satisfied together with the condition
\begin{eqnarray*}\left(x^*_t+r_tx^*_\varphi,0\right)\in\partial(-F)(\bar{x},\bar{y})+r_t\partial f(\bar{x},\bar{y})+\disp\sum^p_{i=1}\beta^t_i\partial g_i(\bar{x},\bar{y}),\;t=1,\ldots,n+1.
\end{eqnarray*}
\end{itemize}}
\end{thm}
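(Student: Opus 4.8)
The plan is to mirror exactly the structure of the proof of Theorem~\ref{optimality cond for Po}, replacing the role of the two-level value function $\ph_\circ$ by the maximizing version $\ph_p$ and invoking Theorem~\ref{max 2 level value function} in place of Theorems~\ref{inner semicompactness}, \ref{convex case}, and \ref{inner semicontinuity}. Under the hypotheses of either (i), (ii), or (iii), those theorems guarantee that $\ph_p$ is locally Lipschitz continuous around $\bar x$, so $\ph_p$ is finite-valued and l.s.c.\ near the reference point. Since $\bar x$ is a local optimal solution to $(P_p)$, which by \eqref{pesimi 1} amounts to minimizing $\ph_p$ over the closed set $X=\Theta_1\cap\vartheta_1^{-1}(\Lambda_1)$, the generalized Fermat rule \cite[Proposition~5.3]{MordukhovichBook2006} yields the inclusion
\begin{equation*}
0\in\partial\ph_p(\bar x)+N_X(\bar x).
\end{equation*}

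Next I would resolve the normal cone term $N_X(\bar x)$ exactly as in the optimistic case: the imposed calmness of the set-valued mapping $\Phi^X$ from \eqref{PhiX} at $(0,\bar x)$ lets us apply \cite[Theorem~4.1]{HenrionJouraniOutrataCalmness2002} to obtain, for every $x^*\in N_X(\bar x)$, a multiplier $\alpha\in N_{\Lambda_1}(\vartheta_1(\bar x))$ with $x^*\in\partial\langle\alpha,\vartheta_1\rangle(\bar x)+N_{\Theta_1}(\bar x)$; this is where the local Lipschitz continuity of $\vartheta_1$ and the closedness of $\Theta_1,\Lambda_1$ enter. Combining this with the Fermat inclusion above gives $0\in\partial\ph_p(\bar x)+\partial\langle\alpha,\vartheta_1\rangle(\bar x)+N_{\Theta_1}(\bar x)$ for some such $\alpha$. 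Then, in each of the three cases, I would substitute the corresponding upper estimate for $\partial\ph_p(\bar x)$ furnished by Theorem~\ref{max 2 level value function}(i), (ii), or (iii) respectively. In case (i) the estimate expresses an element of $\partial\ph_p(\bar x)$ as $-\sum_{t=1}^{n+1}\eta_t x^*_t$ subject to the stationarity relations on the $x^*_t$ (involving $\partial(-F)$, $r_t\partial f$, $\partial\langle u_t,\vartheta_2\rangle$, $N_{\Theta_2}$) and on the auxiliary $x^*_s$ coming from $\partial\ph$; plugging this in and rearranging produces precisely conditions \eqref{pes 1}--\eqref{pes 4} together with the inherited \eqref{op2}--\eqref{op3} and \eqref{op6}. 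Cases (ii) and (iii) are handled the same way, reading off the relations from the convex and inner-semicontinuous estimates, respectively, and identifying the group $\{\eta_t,r_t,\beta^t,\gamma,y_t\}$ of multipliers and the stationarity inclusions listed in the statement.

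The only genuine bookkeeping is to make sure the sign of the negated sum $-\sum_t\eta_t x^*_t$ from the maximizing estimate is carried through correctly when it is set equal (up to a normal-cone element) to zero, i.e.\ that $\sum_t\eta_t x^*_t\in\partial\langle\alpha,\vartheta_1\rangle(\bar x)+N_{\Theta_1}(\bar x)$, which is exactly \eqref{pes 1}. No new variational-analytic tool is needed beyond what was already used for Theorem~\ref{optimality cond for Po}; the substantive content is entirely absorbed into Theorem~\ref{max 2 level value function}. Thus I do not expect a serious obstacle: the main (and only mild) difficulty is the purely notational task of matching the multiplier labels and the numbered inclusions \eqref{op2}, \eqref{op3}, \eqref{op6}, \eqref{conv 3}, \eqref{conv 5}, \eqref{iscn 2}, \eqref{pess 6} in the three cases so that the statement reads coherently, and in checking that the substitution $r:=r_t$ (and $y:=\bar y$ or $y:=y_t$ where indicated) is consistent across all the inherited conditions. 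This completes the plan for the proof.
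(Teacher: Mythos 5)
Your proposal is correct and follows essentially the same route as the paper: the published proof simply states that it follows the lines of the proof of Theorem~\ref{optimality cond for Po} with $\varphi_{\circ}$ replaced by $\varphi_p$ and with the subdifferential estimates taken from Theorem~\ref{max 2 level value function}(i)--(iii), which is precisely the argument you spell out (generalized Fermat rule, resolution of $N_X(\bar{x})$ via the calmness of $\Phi^X$ and \cite[Theorem~4.1]{HenrionJouraniOutrataCalmness2002}, then substitution of the three upper estimates). Your additional attention to the sign bookkeeping in \eqref{pes 1} is a sensible, correct elaboration of what the paper leaves implicit.
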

\begin{proof}
It follows the lines in the proof of Theorem~\ref{optimality cond for Po} with replacing $\varphi_{\circ}$ by $\varphi_p$ therein and applying the subdifferential estimates for $\ph_p$ obtained in assertions (i), (ii), and (iii) of Theorem~\ref{max 2 level value function}, respectively.
\end{proof}

\begin{rem}[\bf comparison and further discussions]\label{rem-pes} {\rm Let us mention here some other relations and consequences of the developed approach and obtained results.

{\bf(i)} If all the functions involved in ($P_p$) are smooth and $\Theta_1:=\mathbb{R}^n$, $\Lambda_1:=\mathbb{R}^k_-$, $\Theta_2:=\mathbb{R}^n\times \mathbb{R}^m$, and $\Lambda_2:=\mathbb{R}^p_-$ therein, the necessary optimality conditions for pessimistic bilevel program established in Theorem~\ref{optimality cond for Pp} agree with those obtained in \cite{DempeMordukhovichZemkohoPessimistic}. Note that the latter paper explores also other approaches to pessimistic bilevel programs with smooth data that are related to the generalized equation and Karush-Kuhn-Tucker reformulations. These two approaches lead us to necessary optimality conditions, which are largely different from those obtained here by using the two-level value function approach.

{\bf(ii)} Theorem~\ref{max 2 level value function} sheds new light on the local Lipschitz continuity and upper estimates for the basic subdifferential of the standard maximum functions corresponding to the replacement of the optimal solution map $S$ in \eqref{2level-pes} by the constraint mapping $K$ of type \eqref{K(x)}. Proceeding as in the proof of Theorem~\ref{optimality cond for Pp} with $\Theta_1:=\mathbb{R}^n$, $\Lambda_1:=\mathbb{R}^k_-$, $\Theta_2:=\mathbb{R}^n\times\mathbb{R}^m$, and $\Lambda_2:=\mathbb{R}^p_-$ allows us to recover, in particular, the necessary optimality conditions for minimax problem obtained in \cite{ZemkohoMinmax2011}.}
\end{rem}

\section{Concluding Remarks}\label{remarks}

This paper shows that the two-level value function approach is instrumental to handle both original optimistic and pessimistic models of bilevel programming with nonsmooth data. Developing this approach allows us to carry out a local sensitivity analysis (Lipschitz continuity and subdifferential estimates) of the two-level value function and derive necessary optimality conditions for local solutions to optimistic and pessimistic bilevel programs without any differentiability assumptions.

Our future research aims to implement the obtained stability and optimality results to designing appropriate numerical algorithms for solving bilevel programs, which belong to the class of ill-posed optimization problems; see \cite{zem16} for more discussions and some developments in this direction for optimistic bilevel models with smooth data. Eventually we plan to proceed with developing generalized versions of Newton's method by involving the construction of the second-order subdifferential/generalized Hessian for extended-real-valued functions \cite{MordukhovichBook2006}.

Another goal of our future research is combine advantages of the two-level value function approach with the one initiated in \cite{hen-sur11} and implemented there for conventional optimistic bilevel programs with twice continuously differential initial data and convex lower-level problems. The latter approach produced in \cite{hen-sur11} first-order and second-order necessary optimality conditions under certain calmness constraint qualifications. We plan to extend this approach, by marring it to the two-level value function one under consideration, to the settings of the original optimistic and pessimistic bilevel programs with smooth and nonsmooth data. The aforementioned notion of the second-order subdifferential seems to be instrumental in the absence of twice continuous differentiability.

\bibliographystyle{amsplain}

\end{document}